\newtheorem{theorem}[definition]{Theorem}
\newtheorem{lemma}[definition]{Lemma}
\newtheorem{corollary}[definition]{Corollary}
\newtheorem{proposition}[definition]{Proposition}
\begin{document}
\begin{frontmatter}

\title{Regularity conditions in the realisability problem with
applications to point processes and\\ random closed sets}
\runtitle{Realisability problem}

\begin{aug}
\author[A]{\fnms{Raphael} \snm{Lachieze-Rey}\ead[label=e1]{lr.raphael@gmail.com}}
\and
\author[B]{\fnms{Ilya} \snm{Molchanov}\corref{}\ead[label=e2]{ilya@stat.unibe.ch}\thanksref{T2}}
\runauthor{R. Lachieze-Rey and I. Molchanov}
\affiliation{Universit\'e Paris Descartes and University of Bern}
\address[A]{Laboratoire MAP 5\\
Universit\'e Paris Descartes\\
45 Rue des saints-p\`eres\\
75006 Paris\\
France\\
\printead{e1}} 
\address[B]{Institute of Mathematical Statistics\\
\quad and Actuarial Science\\
University of Bern\\
Sidlerstrasse 5\\
3012 Bern\\
Switzerland\\
\printead{e2}}
\end{aug}
\thankstext{T2}{Supported by Swiss National Foundation
Grant No.~200021-126503. The revision was done while
the second author held the Chair of Excellence at the University
Carlos III of Madrid supported by the Santander Bank.}

\received{\smonth{3} \syear{2011}}
\revised{\smonth{9} \syear{2013}}

%
\begin{abstract}
We study existence of random elements with partially specified
distributions. The technique relies on the existence of a positive
extension for linear functionals accompanied by additional
conditions that ensure the regularity of the extension needed for
interpreting it as a probability measure. It is shown in which case
the extension can be chosen to possess some invariance properties.

The results are applied to the existence of point processes with
given correlation measure and random closed sets with given
two-point covering \mbox{function} or contact distribution function. It is
shown that the regularity condition can be efficiently checked in
many cases in order to ensure that the obtained point processes are
indeed locally finite and random sets have closed realisations.
\end{abstract}

%
\begin{keyword}[class=AMS]
\kwd[Primary ]{60D05}
\kwd[; secondary ]{28C05}
\kwd{46A40}
\kwd{47B65}
\kwd{60G55}
\kwd{74A40}
\kwd{82D30}
\end{keyword}
\begin{keyword}
\kwd{Point process}
\kwd{correlation measure}
\kwd{random closed set}
\kwd{two-point covering probability}
\kwd{contact distribution function}
\kwd{realisability}
\end{keyword}

\end{frontmatter}

\section{Introduction}\label{secintroduction}

Defining the distribution of a random element $\xi$ in a topological
space $\mathcal{X}$ is equivalent to specialising the expected values
for all
bounded continuous functions $\mathsf{g}(\xi)$. These expected values
define a linear functional $\Phi(\mathsf{g})=\mathbf{E}\mathsf
{g}(\xi)$ on the space of
bounded continuous functions $\mathsf{g}\dvtx \mathcal{X}\mapsto\mathbb
{R}$. It is well known
that a functional $\Phi$ indeed corresponds to a random element if and
only if $\Phi$ is positive [i.e., $\Phi(\mathsf{g})\geq0$ if
$\mathsf{g}$ is
nonnegative] and upper semi-continuous [i.e., $\Phi(\mathsf
{g}_n)\downarrow0$
if $\mathsf{g}_n\downarrow0$]; see, for example, \cite{whit92}.

Below we consider the case of functional $\Phi$ defined only on some
functions on $\mathcal{X}$ and address the \emph{realisability} of
$\Phi$,
that is, the mere existence of a random element $\xi$ such that
$\Phi(\mathsf{g})=\mathbf{E}\mathsf{g}(\xi)$ for $\mathsf{g}$
from the chosen family $\mathsf{G}$ of
functions. The uniqueness is not on the agenda, since typically the
family $\mathsf{G}$ will not suffice to uniquely specify the distribution
of $\xi$. A classical example of this setting is the existence of a
probability distribution with given marginals; see \cite{kel64}. The
present paper focuses on some geometric instances of the problem. We
will see that in most cases the answer to the existence problem
consists of the two main steps.
\begin{longlist}[2.]
\item[1.] (\textit{Positivity}) Checking the positivity condition on $\Phi$---in
most cases this requires checking a system of inequalities,
which is a serious (but unavoidable) computational burden.

\item[2.] (\textit{Regularity}) Ensuring that the extended functional is
regular (namely, upper semi-continuous) and so defines a
$\sigma$-additive measure.
\end{longlist}
The first step ensures that it is possible to extend functional $\Phi$
positively from a certain family of functions to a wider family.
In this work, we put the emphasis on the latter step---checking the
regularity condition, leaving aside the computational difficulties
arising from validating the positivity assumption.

The use of positive extension techniques (that goes back to
Kantorovitch) in the framework of stochastic geometry was
pioneered by Kuna, Lebowitz and Speer
\cite{kunlebspeer11} in application to point processes, which
greatly inspired the current work. In this paper, we establish the
general nature of an idea proposed in \cite{kunlebspeer11} and show
how it leads to various further realisability results. The new idea
is to introduce an additional function, what we call the regularity
modulus, and to formulate sufficient and necessary conditions in terms
of a positive extension of a functional onto the linear space containing
the regularity modulus and requiring only a priori integrability of
the regularity modulus.

We concentrate on two basic examples of the realisability problem: the
existence of point processes with given correlation (factorial moment)
measure and the existence of a random closed set with given two-point
coverage probabilities or contact distribution functions. The
introduction to the realisability issue for point processes is
available in several papers by Kuna, Lebowitz and Speer
\cite{kunlebspeer07,kunlebspeer11}; see also Section~\ref{secpoint-processes} of this paper. The realisability problem for
random closed sets has been widely studied in physics and material
science literature; see
\cite{jiaostiltor07,mar98,tor99,tor06,torstel82} and in particular
the comprehensive monograph by Torquato \cite{tor02b} and a recent
survey by Quintanilla \cite{quin08}. If $\xi$ is a random closed
set (see Section~\ref{secreals-under-smoothn} for formal definitions) in
a locally compact metric space $\mathbb{X}$, its
\emph{one-point covering functions} is defined by
\[
p_x=\mathbf{P}\{x\in\xi\},\qquad x\in\mathbb{X}.
\]
It is easy to characterise all one-point covering functions of random
closed sets as follows.

%
\begin{theorem}
\label{throp}
A function $p_x$, $x\in\mathbb{X}$, with values in $[0,1]$ is the
one-point covering function of a random closed set if and only if
$p$ is upper semi-continuous.
\end{theorem}

The upper semi-continuity of the one-point covering function of a
random closed set $\xi$ is a straightforward consequence of the upper
semi-continuity property of the capacity functional of a random closed
set; see \cite{mo1}, Section~1.1.2. Conversely, the function $p$ from
the theorem is realised (e.g.) as the one-point covering
function of the random set $\xi=\{x\dvtx  p_{x}\geq v\}$ where $v$ is a
uniformly distributed variable (the details are left to the reader).

It is considerably more complicated to characterise \emph{two-point
covering functions}
\[
p_{x,y}=\mathbf{P}\{x,y\in\xi\},\qquad x,y\in\mathbb{X}.
\]
In view of applications to modelling of random media, it is often
assumed that $\xi$ is a stationary set in $\mathbb{R}^d$, so that the
one-point
covering function is constant and the two-point covering function
$p_{x,y}$ depends only on $x-y$.
Since a random closed set can be considered as an upper semi-continuous
indicator function, the realisability problem for the two-point
covering function can be rephrased as follows:

\begin{quote}
Characterise covariance functions of (stationary) upper
semi-continuous random functions with values in $\{0,1\}$.
\end{quote}
These covariances are obviously a sub-family of positive semi-definite
functions. Without the upper semi-continuity requirement, this problem,
of combinatorial nature, was solved by McMillan \cite{mcm55} and
Shepp \cite{shep63,shep67} using the extension \mbox{argument} from
\cite{kel64}. More exactly, they normalised indicators by letting
them take values $+$1 or $-$1 and assumed that the mean is zero. Their
result does not rely on the topological structure of the underlying
space and so does not necessarily lead to an upper semi-continuous
indicator function.

%
\begin{example}
\label{exproduct}
Let $p_{x,y}=\frac{1}{4}$ and let $p_{x}=\frac{1}{2}$ for all
$x,y\in\mathbb{R}$. While this two-point covering function corresponds,
for example, to the indicator field with independent values, it cannot be
obtained as the two-point covering function of a random closed set;
see Proposition~\ref{propproduct-form}.
\end{example}

Even leaving aside the upper semi-continuity property, the
McMillan--Shepp condition involves a family of corner-positive
matrices, which is poorly understood. As a result, its practical use
to check the realisability for random media is rather limited. A
number of authors have attempted to come up with simpler (but only
necessary) conditions; see, for example,
\cite{jiaostiltor07,mater93,quin08,tor06}. Another set of
conditions for joint distributions of binary random variables is
formulated in \cite{sharibr02} in terms of the corresponding copulas.

The realisability problem can be also posed for point processes in
terms of their moment measures. In case of moment measures of
arbitrary order, it has been solved by Lenard \cite{len75a,len75b}.
The case of moment measures up to the second order has been studied by
Kuna, Lebowitz and Speer \cite{kunlebspeer07}, whose
recent paper \cite{kunlebspeer11} contains (among other results) a
\emph{complete} solution of this realisability problem for point
processes with finite third-order moments and hard-core type
conditions with fixed exclusion distance. The results of
\cite{kunlebspeer11} can be extended to higher order moment
measures, as was explicitly indicated there. Again, the positivity
condition of \cite{kunlebspeer11} is extremely difficult to verify,
even more complicated than the original condition for point processes
because of new polynomial functionals involved in the positivity
condition.

The paper is organised as follows.
Section~\ref{secextend-posit-funct} presents a series of general
results on regular extensions and also invariant extensions (relevant
for the existence of stationary random elements). These results form
the theoretical backbone of our study, and are new even in the
abstract setting of extending general positive linear functionals.

Section~\ref{secpoint-processes} presents a number of realisability
conditions for correlation measures of point processes that
considerably extend the results of \cite{kunlebspeer11} by relaxing
the moment and hardcore conditions. One of our most important results
is Theorem~\ref{thrreal-pp-psi} that shows how to split the
positivity and regularity conditions, so that the latter can be
efficiently checked. The importance of the packing number in
relation to realisability conditions for hard-core point processes is
also explained.

Section~\ref{secreals-under-smoothn} deals with the realisability
problem for two-point covering probabilities of random sets. The
closedness of the corresponding random set can be ensured by imposing
appropriate regularity conditions. Section~\ref{seccont-distr-funct}
addresses a further variant of the realisability problem that
involves contact distribution functions of random sets.

The notational convention is that the carrier space is denoted as
$\mathbb{X}$ (e.g., $\mathbb{R},\mathbb{R}^d$), points in the
carrier space are $x,y$,
subsets of carrier spaces are denoted by capitals $X,Y,F$ (while
$Y$ is reserved for counting measures identified with corresponding
support sets), the families of sets (or families of counting measures)
as $\mathcal{X},\mathcal{N},\mathcal{F}$ (while in Section~\ref{secextend-posit-funct} $\mathcal{X}$
denotes also a rather general space) and random element in these
spaces (random sets or point processes) as $\xi$,
real functions acting on $\mathcal{X},\mathcal{N},\mathcal{F}$ are
$\mathsf{g},\mathsf{v}$ and families of
such functions are $\mathsf{G},\mathsf{E},\mathsf{V}$, a functional on
$\mathsf{G},\mathsf{E},\mathsf{V}$ is denoted by $\Phi$, real
numbers are
denoted by $t,r,\lambda$, while $c$ denotes a generic constant and at
the same time the corresponding constant function.

\section{Extending positive functionals}\label{secextend-posit-funct}

Fundamental results about the extension of positive operators form the
heart of our main results, and are necessary to understand the
machinery of the proofs. Nevertheless, the results of the subsequent
sections can be understood without
Section~\ref{secextend-posit-funct}, with the exception of
Definition~\ref{defreg-mod}.

\subsection{General extension theorems}\label{secgener-extens-theor}

Consider a \emph{vector lattice} $\mathsf{E}$, that is a linear space
with a
partial order and such that for any $\mathsf{v}_1,\mathsf{v}_2\in
\mathsf{E}$ their
maximum $\mathsf{v}_1\vee\mathsf{v}_2$ also belongs to $\mathsf
{E}$. The absolute
value $|\mathsf{v}|$ of $\mathsf{v}$ is defined as the sum of
$\mathsf{v}\vee0$ and
$(-\mathsf{v})\vee0$.

Let $\mathsf{G}$ be a \emph{vector subspace} of $\mathsf{E}$, which
is not
necessarily a lattice itself, that is $\mathsf{G}$ may be not closed with
respect to the maximum operation. We say that $\mathsf{G}$
\emph{majorises} $\mathsf{E}$ if each $\mathsf{v}\in\mathsf{E}$ satisfies
$|\mathsf{v}|\leq\mathsf{g}$ for some $\mathsf{g}\in\mathsf{G}$.
A real-valued functional
$\Phi$ defined on $\mathsf{E}$ (resp., $\mathsf{G}$) is said to be
\emph{positive} if $\Phi(\mathsf{v})\geq0$ whenever $\mathsf
{v}\geq0$ and
$\mathsf{v}\in\mathsf{E}$ (resp., $\mathsf{v}\in\mathsf{G}$). A
functional defined on
$\mathsf{E}$ is said to be an \emph{extension} of \mbox{$\Phi\dvtx \mathsf
{G}\mapsto\mathbb{R}$}
if it coincides with $\Phi$ on $\mathsf{G}$. The extended $\Phi$ is
always denoted by the same letter. The following result about
extension of positive functionals goes back to Kantorovich.

%
\begin{theorem}[(See \cite{alipbor06}, Theorem~8.12 and \cite{vul67},
Theorem~X.3.1)]\label{thrext}
Assume that $\mathsf{G}$ is a majorising vector subspace of a vector
lattice $\mathsf{E}$. Then each positive linear functional on $\mathsf{G}$
admits a positive extension on the whole $\mathsf{E}$.
\end{theorem}

If $\mathsf{G}$ is a lattice itself, then it is possible to gain much
more control over the extension of $\Phi$, for example, a continuous
functional admits a continuous extension, see \cite{vul67}, Section~X.5.
On the contrary, very little is known about regularity properties of
the extension if $\mathsf{G}$ is not a lattice.

In the following, we assume that $\mathsf{G}$ and $\mathsf{E}$ are families
of functions $g$ on a certain space $\mathcal{X}$. If $\mathsf{G}$ contains
constant functions, the positivity of $\Phi$ over $\mathsf{G}$ can be
equivalently formulated as
%
\begin{equation}
\label{eqphi-positivity} \Phi(\mathsf{g})\geq\inf_{X\in\mathcal
{X}}\mathsf{g}(X).
\end{equation}
This equivalence is a particular case of the following result for
$\chi=0$ [replace $\mathsf{g}$ with $-\mathsf{g}$ in
(\ref{eqpositivity-with-constants})].

%
\begin{proposition}
\label{corsingle-ext}
Assume that vector space $\mathsf{G}$ contains constant functions and
denote by $\mathsf{G}\setminus\mathbb{R}$ the family of nonconstant
functions
from $\mathsf{G}$. If $\chi$ is any nonnegative function on
$\mathcal{X}$, then a linear functional $\Phi$ on $\mathsf{G}$
admits a
positive extension on $\mathsf{G}+\mathbb{R}\chi$ with $\Phi(\chi
)=r$ if and
only if
%
\begin{equation}
\label{eqpositivity-with-constants} r=\sup_{\mathsf{g}\in\mathsf{G},
\mathsf{g}\leq\chi}\Phi(\mathsf{g}) =\sup
_{\mathsf{g}\in\mathsf{G}\setminus\mathbb{R}}\inf_{X\in
\mathcal{X}} \bigl[\chi(X)-\mathsf{g}(X)
\bigr]+\Phi(\mathsf{g})<\infty.
\end{equation}
\end{proposition}

\begin{pf}
Since every element of $\mathsf{G}$ can be written $c+\mathsf{g}$
with $\mathsf{g}\in
\mathsf{G}\setminus\mathbb{R}$ and $c\in\mathbb{R}$, the left-hand
side of
(\ref{eqpositivity-with-constants}) equals
\[
r=\sup_{\mathsf{g}\in\mathsf{G}\setminus\mathbb{R}}\sup_{c\in
\mathbb{R}\dvtx  c+\mathsf{g}\leq
\chi}c +\Phi(\mathsf{g}) =
\sup_{\mathsf{g}\in\mathsf{G}}c_{\mathsf
{g}}+\Phi(\mathsf{g}),
\]
where $c_{\mathsf{g}}=\inf_{X\in\mathcal{X}}(\chi-\mathsf{g})(X)$
is the largest $c$
such that $c+\mathsf{g}\leq\chi$, which yields the equality in
(\ref{eqpositivity-with-constants}).

The necessity of (\ref{eqpositivity-with-constants}) is
straightforward because $r\leq\Phi(\chi)<\infty$. For the
sufficiency, assume that (\ref{eqpositivity-with-constants})
holds. The proof consists in checking that assigning the value
$\Phi(\chi)= r$ yields a positive extension on
$\mathsf{G}+\mathbb{R}\chi$. Let us first prove that $\Phi$ is positive
on $\mathsf{G}$. If some $\mathsf{g}\leq0$ satisfies $\Phi(\mathsf
{g})>0$, then
$\Phi(t\mathsf{g})\uparrow\infty$ as $t\to\infty$ whereas
$t\mathsf{g}\leq
\chi$, which contradicts (\ref{eqpositivity-with-constants}).

Let $\mathsf{g}+\lambda\chi\geq0$ for $\lambda\neq0$ and
$\mathsf{g}\in\mathsf{G}$. If $\lambda>0$, then $-\lambda
^{-1}\mathsf{g}\leq
\chi$, whence $\Phi(-\lambda^{-1}\mathsf{g})\leq r$ and $\Phi
(\mathsf{g}+\lambda
\chi) \geq- \lambda r+\lambda\Phi(\chi)= 0$. If $\lambda<0$,
$-\lambda^{-1}\mathsf{g}\geq\chi$ whence $-\lambda^{-1}\mathsf
{g}$ is larger
than any $\mathsf{g}'\leq\chi$, and
\[
\Phi\bigl(-\lambda^{-1}\mathsf{g}\bigr)\geq\sup_{\mathsf{g}'\in{\mathsf
{G}},\mathsf{g}'\leq\chi}
\Phi\bigl(\mathsf{g}'\bigr)= r
\]
by
monotonicity of $\Phi$ on $\mathsf{G}$. Hence,
$\Phi(\mathsf{g}+\lambda\chi)\geq-\lambda r+\lambda\Phi(\chi)=0$.
\end{pf}

The advantage of the latter condition in
(\ref{eqpositivity-with-constants}) consists in the explicit
reference to the space $\mathcal{X}$ where random elements lie instead
of checking
the inequality $g\leq\chi$.

\subsection{Regularity conditions and distributions of random elements}\label{secregul-cond-distr}

Let $\mathsf{E}$ be a certain family of functions $\mathsf
{v}\dvtx \mathcal{X}\mapsto\mathbb{R}$
defined on a space $\mathcal{X}$ with lattice operation being the pointwise
maximum and the corresponding partial order.

%
\begin{theorem}[(Daniell, see \cite{dud02}, Section~4.5  and \cite{koen97}, Theorem~14.1)]\label{thrdan}
Let a vector lattice $\mathsf{E}$ consist of real-valued functions on
$\mathcal{X}$ and let $\mathsf{E}$ contain constants. If $\Phi$ is a positive
functional on $\mathsf{E}$ such that $\Phi(\mathsf{v}_n)\downarrow
0$ for each
sequence $\mathsf{v}_n\downarrow0$ and $\Phi(1)=1$, then there
exists a
unique random element $\xi$ in $\mathcal{X}$, measurable with respect
to the
$\sigma$-algebra generated by all functions from $\mathsf{E}$, such
that $\Phi(\mathsf{v})=\mathbf{E}\mathsf{v}(\xi)$ for all $\mathsf
{v}\in\mathsf{E}$.
\end{theorem}

In view of the positivity of $\Phi$, the condition imposed on $\Phi$
is equivalent to its upper semi-continuity on $\mathsf{E}$.
In this paper, we start with a functional $\Phi$ defined on a vector
sub-space $\mathsf{G}\subset\mathsf{E}$ and discuss the existence of a
random element $\xi\in\mathcal{X}$ such that $\Phi(\mathsf
{g})=\mathbf{E}\mathsf{g}(\xi)$ for all
$\mathsf{g}\in\mathsf{G}$. In this case, $\Phi$ is said to be \emph
{realisable}
as a probability distribution on $\mathcal{X}$.

%
\begin{assumption}
\label{assumpg}
The vector space $\mathsf{G}$ of functions on $\mathcal{X}$ contains constants
and, for each $\mathsf{g}_1,\mathsf{g}_2\in\mathsf{G}$, there
exists a $\mathsf{g}\in
\mathsf{G}$ such that $(\mathsf{g}_{1}\vee\mathsf{g}_{2})\leq
\mathsf{g}$.
\end{assumption}

From now on assume that $\mathcal{X}$ is a completely regular topological
space, that is, each closed set and each singleton disjoint from it can be
separated by a continuous function.

%
\begin{definition}
\label{defreg-mod}
Given a vector space $\mathsf{G}$ of functions on $\mathcal{X}$, a
\emph{regularity modulus} on $\mathcal{X}$
is a lower semi-continuous function
$\chi\dvtx \mathcal{X}\mapsto[0,\infty]$ such that
%
\begin{equation}
\label{eqhlambda} \mathcal{H}_\mathsf{g}=\bigl\{X\in\mathcal{X}\dvtx  \chi(X)
\leq\mathsf{g}(X)\bigr\}
\end{equation}
is relatively compact for each $\mathsf{g}\in\mathsf{G}$ (if all
$\mathsf{g}\in
\mathsf{G}$ are bounded, $\chi$ is a regularity modulus if and only if
it has compact level sets).
\end{definition}

Examples of regularity moduli are given in
Sections~\ref{secpoint-processes} and \ref{secreals-under-smoothn}.
A measurable function $\mathsf{v}\dvtx \mathcal{X}\mapsto\mathbb{R}$ is
said to be
\emph{$\chi$-regular} if $\mathsf{v}$ is continuous on $\mathcal
{H}_\mathsf{g}$ for each
$\mathsf{g}$ in $\mathsf{G}$. Each continuous function is trivially
$\chi$-regular. The proof of the following central result is based on
the ideas from the proof of \cite{kunlebspeer11}, Theorem~3.14. It
should be noted that our result entails not only the realisability,
but also provides a bound for the expected value of the regularity
modulus. It also holds on not necessarily completely regular space
$\mathcal{X}$ if the regularity modulus is continuous or otherwise
without the
explicit bound on $\mathbf{E}\chi(\xi)$.

%
\begin{theorem}
\label{thrreal-reg}
Consider a vector space $\mathsf{G}$ of functions on $\mathcal{X}$ satisfying
Assumption~\ref{assumpg} and such that each $\mathsf{g}$ from
$\mathsf{G}$ is
$\chi$-regular for a regularity modulus $\chi$. Let $\Phi$ be a
linear functional on $\mathsf{G}$ with $\Phi(1)=1$. Then, for any given
$r\geq0$, there exists a Borel random element $\xi$ in $\mathcal{X}$ such
that
%
\begin{equation}
\label{eqf-xi} %
\cases{ \mathbf{E}\mathsf{g}(\xi)=\Phi(\mathsf{g}) &\quad for all $\mathsf{g}\in\mathsf{G}$,
\vspace*{3pt}\cr
\mathbf{E}\chi(\xi)\leq r,}
\end{equation}
if and only if
%
\begin{equation}
\label{eqlip} \sup_{\mathsf{g}\in\mathsf{G}, \mathsf{g}\leq\chi}\Phi
(\mathsf{g})\leq r.
\end{equation}
\end{theorem}

\begin{pf}
Condition (\ref{eqlip}) is necessary because $\mathsf{g}\leq\chi$ implies
$\Phi(\mathsf{g})=\break \mathbf{E}\mathsf{g}(\xi)\leq\mathbf{E}\chi
(\xi)\leq r$.

\textit{Sufficiency}.
Let $\mathsf{E}$ be the family of all $\chi$-regular functions
$\mathsf{v}$
that satisfy $\mathsf{v}\leq\mathsf{g}$ for some $\mathsf{g}\in
\mathsf{G}$. Each function
$\mathsf{v}\in\mathsf{E}$ is Borel measurable. Note that $\mathsf
{E}$ contains
all bounded continuous functions that generate the Baire
$\sigma$-algebra on $\mathcal{X}$ being in general a sub-$\sigma
$-algebra of
the Borel one. For each $\mathsf{v}_{1},\mathsf{v}_{2}\in\mathsf
{E}$, the function
$\mathsf{v}_{1}\vee\mathsf{v}_{2}$ is $\chi$-regular and is
majorised by
$\mathsf{g}_1\vee\mathsf{g}_2$, where $\mathsf{g}_1,\mathsf
{g}_2\in\mathsf{G}$ majorise $\mathsf{v}_1$
and $\mathsf{v}_2$, respectively. In view of Assumption~\ref{assumpg},
$\mathsf{E}$ is a lattice.

Without loss of generality, assume that the supremum in
(\ref{eqlip}) equals $r$. By Proposition~\ref{corsingle-ext},
$\Phi$ is positive on $\mathsf{G}$ and can be positively extended onto
$\mathsf{G}+\mathbb{R}\chi$ with $\Phi(\chi)=r$, and further on to
$\mathsf{E}+\mathbb{R}\chi$ by Theorem~\ref{thrext}. It remains to
prove that
the obtained extension satisfies conditions of
Theorem~\ref{thrdan}. For that, we use an argument similar to that
of \cite{kunlebspeer11}. First, restrict the obtained functional
$\Phi$ onto~$\mathsf{E}$. Assume that $\chi$ is strictly positive.
Consider a sequence $\{\mathsf{v}_n, n\geq1\}\subset\mathsf{E}$
such that
$\mathsf{v}_n\downarrow0$. For each $n$, let $\mathsf{g}_n$ be a
function of
$\mathsf{G}$ such that $\mathsf{v}_n\leq\mathsf{g}_n$. Take
$\varepsilon>0$. Then
$\mathcal{K}_n=\{X\dvtx  \mathsf{v}_n(X)\geq\varepsilon\chi(X)\}$ is a
subset of
relatively compact $\mathcal{H}_{\mathsf{g}_n/\varepsilon}$, since
$\chi$ is a regularity
modulus. Since $\mathsf{v}_n$ is continuous on $\mathcal{H}_{\mathsf
{g}_n/\varepsilon}$, the set
$\mathcal{K}_n$ is closed and, therefore, compact. The pointwise convergence
$\mathsf{v}_n\downarrow0$ yields that $\bigcap_n \mathcal
{K}_n=\varnothing$ (recall that
$\chi$ is strictly positive). Since $\{\mathcal{K}_n\}$ is a decreasing
sequence of compact sets, $\mathcal{K}_{n_0}=\varnothing$ for some $n_0$,
whence $\mathsf{v}_n(X)< \varepsilon\chi(X)$ for sufficiently large
$n$. The
positivity of $\Phi$ on $\mathsf{E}+\mathbb{R}\chi$ implies $\Phi
(\mathsf{v}_n)\leq
\varepsilon\Phi(\chi)=\varepsilon r$, whence $\Phi(\mathsf
{v}_n)\downarrow0$.
Theorem~\ref{thrdan} yields the existence of a random element $\xi$
in $\mathcal{X}$ such that $\Phi(\mathsf{v})=\mathbf{E}\mathsf
{v}(\xi)$ for all $\mathsf{v}\in\mathsf{E}$.

Since $\chi$ is lower semi-continuous and $\mathcal{X}$ is completely
regular, it can be pointwisely approximated from below by a sequence
$\{\mathsf{v}_n\}$ of nonnegative continuous functions; see
\cite{bour89}, Chapter~9. Then $\tilde{\mathsf{v}}_n=\min(n,\mathsf
{v}_n)$ belongs to
$\mathsf{E}$ and also approximates $\chi$ from below, so that
$\mathbf{E}
\tilde{\mathsf{v}}_n(\xi)=\Phi(\tilde{\mathsf{v}}_n)\leq\Phi
(\chi)=r$, while the
monotone convergence theorem yields
\[
\mathbf{E}\chi(\xi) =\lim_{n\to\infty}\mathbf{E}\tilde{
\mathsf{v}}_n(\xi)\leq r.
\]
If $\chi$ is not strictly positive, it suffices to apply the above
argument to $\chi'=1+\chi$ and use the linearity of $\Phi$.
\end{pf}

Condition (\ref{eqlip}), equivalent to
(\ref{eqpositivity-with-constants}), is expressed solely in terms of
the values taken by $\Phi$ on $\mathsf{G}$ and, therefore, yields a
self-contained solution of the realisability problem. It is not easy
to check in general, but if $\chi$ can be approximated by functions
$\chi_{n}\in\mathsf{G}$, $n\geq1$, then it is possible to ``split''
(\ref{eqlip}) into the positivity condition on $\Phi$ and the uniform
boundedness of $\Phi(\chi_n)$, $n\geq1$. This idea is used
successfully in several different frameworks, which justify the
abstract setting of Theorem~\ref{thrreal-reg}: in
Section~\ref{secpoint-processes} for point processes (see
Theorem~\ref{thmrealisability-intro}), in
Section~\ref{secsmoothm-restr} for random closed sets (see
Theorem~\ref{thmset-bounded-variation}) and in \cite{GalLac} in the
framework of random measurable sets with the regularity modulus being
the perimeter of a set.

The realisability problem is particularly simple if $\mathcal{X}$ is compact
and $\mathsf{G}$ consists of continuous functions. Then, for identically
vanishing $\chi$, Theorem~\ref{thrreal-reg} yields the following
result, which is similar to the Riesz--Markov theorem; see
\cite{koen97}.

%
\begin{corollary}
\label{correal-reg}
Let $\mathcal{X}$ be a compact space with its Borel $\sigma$-algebra.
Consider a vector space $\mathsf{G}$ containing constants such that
each $\mathsf{g}\in\mathsf{G}$ is continuous and a map $\Phi
\dvtx \mathsf{G}\mapsto\mathbb{R}$
such that $\Phi(1)=1$. Then there exists a random element $\xi$ in
$\mathcal{X}$ such that $\mathbf{E}\mathsf{g}(\xi)=\Phi(\mathsf
{g})$ for all $\mathsf{g}\in\mathsf{G}$ if and
only if $\Phi$ is a linear positive functional on $\mathsf{G}$.
\end{corollary}

It should be noted that the complete regularity assumption on $\mathcal
{X}$ is
not needed if the regularity modulus $\chi$ is continuous.

\subsection{Passing to the limit}\label{secpassing-limit}

The following result shows that the family of all random elements that
realise $\Phi$ in the sense of (\ref{eqf-xi}) is weakly compact.

%
\begin{theorem}
\label{thrclosedness}
Assume that $\mathsf{G}$ satisfies Assumption~\ref{assumpg} and
consists of continuous functions on a Polish space $\mathcal{X}$ with
regularity modulus $\chi$. Let $\Phi$ be a linear positive
functional on $\mathsf{G}$. Then the family $\mathfrak{M}$ of all
Borel random
elements $\xi$ that satisfy (\ref{eqf-xi}) for any given $r\geq0$
is compact in the weak topology.
\end{theorem}

\begin{pf}
Since $\chi$ is a regularity modulus, the set $\mathcal
{H}_{r/\varepsilon}$ is
compact. By Markov's inequality,
\[
\mathbf{P}\{\xi\notin\mathcal{H}_{r/\varepsilon}\}=\mathbf{P}\bigl\{
\chi(\xi)>r/
\varepsilon\bigr\}\leq\varepsilon,
\]
for all $\xi\in\mathfrak{M}$, so that $\mathfrak{M}$ is tight.

Let $\{\xi_n, n\geq1\}$ be random elements from $\mathfrak{M}$.
Assume that
$\xi_n$ converges weakly to some $\xi$. Without loss of generality,
assume that the $\xi_n$'s are defined on the same probability space
and converge almost surely to $\xi$. Since $\chi$ is nonnegative,
Fatou's lemma yields
\[
r\geq\liminf\mathbf{E}\chi(\xi_n) \geq\mathbf{E}\liminf\chi(
\xi_n) \geq\mathbf{E}\chi(\lim\xi_n)=\mathbf{E}\chi(\xi),
\]
where the lower semi-continuity of $\chi$ also has been used.

Take an arbitrary $\mathsf{g}\in\mathsf{G}$ and define $\mathcal
{H}_{\lambda\mathsf{g}}$ as
in (\ref{eqhlambda}). Let $\mathsf{g}^+(X)=\max(\mathsf{g}(X),0)$
be the
positive part of $\mathsf{g}$. Then, for $\lambda>0$,
\[
\mathbf{E}\mathsf{g}^+(\xi_n)=\mathbf{E}\mathsf{g}^+(\xi
_n)\mathbh{1}_{\xi_n \notin\mathcal{H}_{\lambda\mathsf{g}}} + \mathbf
{E}\mathsf{g}^+(
\xi_n)\mathbh{1}_{\xi_n \in\mathcal
{H}_{\lambda\mathsf{g}}}.
\]
Since $\mathsf{g}$ is continuous, $\mathcal{H}_{\lambda\mathsf{g}}$
is closed (and compact), so
that if $\xi_n\in\mathcal{H}_{\lambda\mathsf{g}}$ for infinitely
many $n$, then also
$\xi\in\mathcal{H}_{\lambda\mathsf{g}}$. Furthermore, $\lambda
\mathsf{g}$ and also $\mathsf{g}$
itself, are continuous and bounded on $\mathcal{H}_{\lambda\mathsf
{g}}$, so that
Fatou's lemma yields
\begin{eqnarray*}
\limsup\mathbf{E}\mathsf{g}^+(\xi_n)\mathbh{1}_{\xi_n \in
\mathcal{H}_{\lambda\mathsf{g}}} &\leq&
\mathbf{E}\limsup\bigl(\mathsf{g}^+(\xi_n)\mathbh{1}_{\xi_n \in
\mathcal{H}_{\lambda\mathsf{g}}}
\bigr)
\\
&\leq&\mathbf{E}\mathsf{g}^+(\xi)\mathbh{1}_{\xi\in\mathcal
{H}_{\lambda\mathsf{g}}}\leq\mathbf{E}
\mathsf{g}^+(\xi).
\end{eqnarray*}
Thus,
\[
\limsup\mathbf{E}\mathsf{g}^+(\xi_n)\leq\mathbf{E}\frac{\chi
(\xi_n)}{\lambda}+
\mathbf{E}\mathsf{g}^+(\xi) \leq\frac{r}{\lambda}+\mathbf{E}\mathsf
{g}^+(\xi).
\]
Since $\lambda$ is arbitrary,
\[
\limsup\mathbf{E}\mathsf{g}^+(\xi_n)\leq\mathbf{E}\mathsf{g}^+(\xi).
\]
Since $\mathsf{g}^+$ is nonnegative, Fatou's lemma yields that
$\mathbf{E}
\mathsf{g}^+(\xi_n)\to\mathbf{E}\mathsf{g}^+(\xi)$. By applying
the same argument to
the function $(-\mathsf{g})$, $\lim\mathbf{E}\mathsf{g}(\xi
_n)=\mathbf{E}\mathsf{g}(\xi)$, so that $\mathbf{E}
\mathsf{g}(\xi)=\Phi(\mathsf{g})$ for all $\mathsf{g}\in\mathsf
{G}$. Therefore,
$\xi\in\mathfrak{M}$.
\end{pf}

The following result concerns realisability of pointwise limits of
linear functionals. Special conditions of this type for correlation
measures of point processes are given in
\cite{kunlebspeer11}, Section~3.4.

%
\begin{theorem}
\label{thrlimits}
Let $\{\Phi_n, n\geq1\}$ be a sequence of linear positive functionals
on a space $\mathsf{G}$ that satisfies the assumptions of
Theorem~\ref{thrclosedness}. Assume that
%
\begin{equation}
\label{eqliminfn-supgin-g} \liminf_n \sup_{\mathsf{g}\in\mathsf{G},
\mathsf{g}\leq\chi}
\Phi_n(\mathsf{g})<\infty.
\end{equation}
If $\Phi_n(\mathsf{g})\to\Phi(\mathsf{g})$ for all $\mathsf{g}\in
\mathsf{G}$, then $\Phi$
is realisable as a random element $\xi$ satisfying (\ref{eqf-xi})
and such that $\xi$ is the weak limit of random elements realising
$\Phi_{n_k}$ for a subsequence $n_k$.
\end{theorem}

\begin{pf}
By passing to a subsequence, it suffices to assume that
(\ref{eqliminfn-supgin-g}) holds for the limit instead of the
lower limit. Let $\xi_n$ be a random element that realises $\Phi_n$.
If $r$ is larger than the limit of (\ref{eqliminfn-supgin-g}), then
$\mathbf{P}\{\xi_n\notin\mathcal{H}_{r/\varepsilon}\}\leq
\varepsilon$, so that
$\{\xi_n\}$ is a tight sequence. Without loss of generality, assume
that $\xi_n$ weakly converges to a random element $\xi$.

The pointwise convergence of $\Phi_n$ yields that $\mathbf{E}\mathsf
{g}(\xi_n)\to
\Phi(\mathsf{g})$ for all $\mathsf{g}\in\mathsf{G}$. Now the
arguments from the
proof of Theorem~\ref{thrclosedness} can be used to show that
$\mathbf{E}
\mathsf{g}(\xi_n)\to\mathbf{E}\mathsf{g}(\xi)$, so that $\mathbf
{E}\mathsf{g}(\xi)=\Phi(\mathsf{g})$ for all
$\mathsf{g}\in\mathsf{G}$, that is, $\xi$ indeed satisfies (\ref
{eqf-xi}).
\end{pf}

\subsection{Invariant extension}\label{secinvariant-extensions}

Consider an Abelian group $\Theta$ of continuous transformations
acting on $\mathcal{X}$. For a function $\mathsf{v}$ on $\mathcal
{X}$, define
\[
(\theta\mathsf{v}) (X)=\mathsf{v}(\theta X),\qquad\theta\in\Theta,X\in
\mathcal{X}.
\]
A functional $\Phi$ is said to be $\Theta$-invariant if, for each
$\theta\in\Theta$ and $\mathsf{v}$ from the domain of definition of
$\Phi$,
$\Phi(\theta\mathsf{v})$ is defined and equal to $\Phi(\mathsf{v})$.

A Borel random element $\xi$ in $\mathcal{X}$ is said to be
\emph{$\Theta$-stationary} if, for each $\theta\in\Theta$, $\theta
\xi$ has the same distribution as $\xi$.
A variant of the following result for correlation measures of point
processes is given in \cite{kunlebspeer11}, Theorem~4.3.

%
\begin{theorem}
\label{thrchi-approx}
\label{thminvar-single}
Assume that $\mathsf{G}$ is a $\Theta$-invariant space satisfying
Assumption~\ref{assumpg} and consisting of $\chi$-regular
functions. Furthermore, assume that at least one of the following
conditions holds:
\begin{longlist}[(ii)]
\item[(i)] $\mathsf{G}$ consists of continuous functions and $\chi$ is
pointwisely approximated from below by a monotone sequence of
functions $\mathsf{g}_n\in\mathsf{G}$, $n\geq1$.
\item[(ii)] $\chi$ is $\Theta$-invariant.
\end{longlist}
Let $\Phi$ be a $\Theta$-invariant functional on $\mathsf{G}$. Then,
for every given $r\geq0$, there exists a $\Theta$-stationary random
element $\xi$ in $\mathcal{X}$ satisfying (\ref{eqf-xi})
if and only if (\ref{eqlip}) holds.
\end{theorem}

\begin{pf}
(i)~As in \cite{kunlebspeer11}, Proposition 4.1, the proof consists in
checking hypotheses of the Markov--Kakutani fixed-point theorem.
Let $\mathfrak{M}$ be the family of random elements $\xi$ that
realise $\Phi$
on $\mathsf{G}$, and satisfy $\mathbf{E}\chi(\theta\xi)\leq r$ for every
$\theta\in\Theta$. The family $\mathfrak{M}$ is easily seen to be convex
with respect to addition of measures, it is compact by
Theorem~\ref{thrclosedness}, and $\Theta$-invariant, since $\Phi$
is $\Theta$-invariant on $\mathsf{G}$. It remains to prove that
$\mathfrak{M}$
is not empty.

In view of (\ref{eqlip}), it is possible to extend
$\Phi$ positively onto $\mathsf{G}+\mathbb{R}\chi$, so that
$\mathbf{E}\chi(\xi)\leq
r$. The $\Theta$-invariance of $\Phi$ on $\mathsf{G}$ together with the
monotone convergence theorem imply that $\mathbf{E}\chi(\theta\xi)
=\mathbf{E}
\chi(\xi)\leq r$, whence $\xi\in\mathfrak{M}$.

(ii)~By Proposition~\ref{corsingle-ext}, we can extend
$\Phi$ positively onto the $\Theta$-invariant vector space
$\mathsf{V}=\mathsf{G}+\mathbb{R}\chi$. Since $\Phi$ is $\Theta
$-invariant on
$\mathsf{G}$, we have $\Phi(\theta(\mathsf{g}+t\chi))=\Phi(\theta
\mathsf{g})+t
\Phi(\theta\chi)=\Phi(\mathsf{g}+t \chi)$ for $\mathsf{g}+t\chi
$ in $\mathsf{V}$,
whence $\Phi$ is $\Theta$-invariant on $\mathsf{V}$. According to
\cite{sil56}, Theorem~3, $\Phi$ admits a positive $\Theta$-invariant
extension to the space $\mathsf{E}+\mathbb{R}\chi$, defined like in
the proof
of Theorem~\ref{thrreal-reg}. The restriction of the obtained
functional onto $\mathsf{E}$ corresponds to a random element $\xi$ in
$\mathcal{X}$ that verifies (\ref{eqf-xi}) and satisfies $\mathbf
{E}(\theta
\mathsf{v})(\xi)=\Phi(\theta\mathsf{v})=\Phi(\mathsf{v})=\mathbf
{E}\mathsf{v}(\xi)$,
$\theta\in\Theta$, for $\mathsf{v}$ in $\mathsf{E}$. Since
$\mathsf{E}$ contains
all bounded continuous functions on $\mathcal{X}$, $\theta\xi$ and
$\xi$
are identically distributed for all $\theta\in\Theta$.
\end{pf}

\section{Correlation measures of point processes}\label{secpoint-processes}

\subsection{Framework and main results}\label{secmoment-conditions}

Let $\mathcal{N}$ be the family of locally finite counting measures on a
locally compact complete separable metric space $\mathbb{X}$. We denote
the support of $Y\in\mathcal{N}$ by the same letter $Y$, so that
$x\in Y$
means $Y(\{x\})\geq1$.

Equip $\mathcal{N}$ with the vague topology, see \cite{davj},
Chapter~7, so that
$\mathcal{N}$ is metric and so completely regular. A random element
$\xi$ in
$\mathcal{N}$ with the corresponding Borel $\sigma$-algebra is called a
\emph{point process}. Denote by $\mathcal{N}_0$ the family of
\emph{simple} counting measures, that is, those which do not attach mass
2 or more to any given point. If $\xi$ is simple, that is, $\xi\in
\mathcal{N}_0$ a.s., then $\xi$ can be identified with a locally
finite random set in $\mathbb{X}$, which is also denoted by $\xi$.

For a real function $h$ on $\mathbb{X}\times\mathbb{X}$ and counting measure
$Y=\sum_i\delta_{x_i}$ given by the sum of Dirac measures, define
\[
\mathsf{g}_h(Y)=\sum_{x_i,x_j\in Y, i\neq j}h(x_i,x_j),
\]
whenever the series absolutely converges, the empty sum being
$0$. Note that the sum in the right-hand side is taken over all pairs
of distinct points from the support of $Y$, where multiple points
appear several times according to their multiplicities. The value
$\mathsf{g}_h(Y)$ is necessarily finite if $h$ is bounded and has a bounded
support. The value $\mathsf{g}_h(Y)$ is termed in \cite
{kunlebspeer11} the
quadratic polynomial of $Y$, while polynomials of order $n\geq1$ are
sums of functions of $n$ points of the process, and are constants if
$n=0$.

Let $\mathsf{G}$ be the vector space formed by constants and functions
$\mathsf{g}_h$ for $h$ from the space $\mathscr{C}_{\mathrm{o}}$ of
symmetric continuous functions
with compact support. Note that $\mathsf{G}$ satisfies
Assumption~\ref{assumpg}, since
\[
(c_{1}+\mathsf{g}_{h_{1}})\vee(c_{2}+
\mathsf{g}_{h_{2}}) \leq c_{1}\vee c_{2}+
\mathsf{g}_{h_{1}\vee h_{2}}\in\mathsf{G}
\]
for all $c_1,c_2\in\mathbb{R}$ and $h_1,h_2\in\mathscr{C}_{\mathrm
{o}}$. Furthermore, each $\mathsf{g}_h$
is continuous in the vague topology, and so is $\chi$-regular for any
regularity modulus $\chi$.

Assume that $\xi$ has locally finite second moment, that is, $\mathbf
{E}\xi(A)^2$
is finite for each bounded $A$. The \emph{correlation measure} $\rho$
(also called the second factorial moment measure) of a point process
$\xi$ is a measure on $\mathbb{X}\times\mathbb{X}$ that satisfies
%
\begin{equation}
\label{eqpp-realisable} \int_{\mathbb{X}\times\mathbb{X}}h(x,y)\rho
(dx\,dy)=\mathbf{E}
\mathsf{g}_h(\xi)
\end{equation}
for each $h\in\mathscr{C}_{\mathrm{o}}$; see \cite{davj}, Section~5.4 and
\cite{skm}, Section~4.3.
The left-hand side defines a linear functional $\Phi(\mathsf{g}_h)$ on
$\mathsf{g}_h\in\mathsf{G}$.

Let $\mathcal{X}$ be a subset of $\mathcal{N}$, which may be
$\mathcal{N}$ itself. Recall that
a subset of a completely regular space is completely regular, see
\cite{kur66}, Theorem~14.I.2. Given a symmetric locally finite measure
$\rho$ on $\mathbb{X}\times\mathbb{X}$, the \emph{realisability problem}
amounts to the existence of a point process $\xi$ with realisations
from $\mathcal{X}$ and with correlation measure $\rho$, so that
$\Phi(\mathsf{g}_h)=\mathbf{E}\mathsf{g}_h(\xi)$ for all $h\in
\mathscr{C}_{\mathrm{o}}$.

By (\ref{eqphi-positivity}), the positivity of $\Phi$ means
%
\begin{equation}
\label{eqppk} \Phi(\mathsf{g}_h)\geq\inf_{Y\in\mathcal{X}}
\mathsf{g}_h(Y)
\end{equation}
for all $h\in\mathscr{C}_{\mathrm{o}}$. Then it is clear that the
positivity of $\Phi$ is
necessary for its realisability. If $\mathcal{X}$ is compact in the vague
topology, then Corollary~\ref{correal-reg} applies and the positivity
condition (\ref{eqppk}) is necessary and sufficient for the
realisability of $\rho$.

However, in general the positivity condition alone is not sufficient
for the realisability; see \cite{kunlebspeer07},
Example~3.12. In the
following, we find another condition that is not directly related to
the positivity, but together with the positivity, is necessary and
sufficient for the realisability.

As an introduction, let us present our results for $\mathbb{X}$ being a
subset of the Euclidean space $\mathbb{R}^{d}$. For $\varepsilon\geq
0$, define
\[
\chi_{\varepsilon}(Y) =\sum_{x,y\in Y, x\neq
y}\|x-y
\|^{-d-\varepsilon}, \qquad Y\in\mathcal{N},
\]
which is later acknowledged as being a regularity modulus (see
Definition \ref{defreg-mod}) if $\varepsilon\neq0$. Note that
$\chi_\varepsilon(Y)$ is infinite if $Y$ has multiple points.
The tools developed in this paper enable us to resolve the original
realisability problem with a supplementary regularity condition
involving $\chi_{\varepsilon}$.

%
\begin{theorem}
\label{thmrealisability-intro}
\textup{(i)} Let $\mathbb{X}$ be a compact subset of $\mathbb
{R}^{d}$ without
isolated points. A symmetric finite measure $\rho(dx\,dy)$ on
$\mathbb{X}\times\mathbb{X}$ is the correlation measure of a simple
point process
$\xi\subset\mathbb{X}$ such that $\mathbf{E}\chi_{0}(\xi)<\infty
$ if and only
if $\Phi$ given by the left-hand side of (\ref{eqpp-realisable}) is
positive and
\[
\int_{\mathbb{X}\times\mathbb{X}} \|x-y\|^{-d}\rho(dx\,dy)<\infty.
\]

\textup{(ii)} Let $\rho$ be a symmetric locally finite measure on
$\mathbb{R}^{d}\times\mathbb{R}^{d}$ such that \mbox{$\rho((A+x)\times
(B+x))=\rho(A\times
B)$} for all $x\in\mathbb{R}^d$ and measurable sets $A$ and $B$. Then there
exists a simple stationary point process $\xi$ with correlation
measure $\rho$, such that
\[
\mathbf{E}\chi_{0}(\xi\cap C)<\infty
\]
for every compact $C\subset\mathbb{R}^{d}$, if and only if $\Phi$
defined by
(\ref{eqpp-realisable}) is positive and
%
\begin{equation}
\label{eqreg-condition-intro} \int_{B\times B}\|x-y\|^{-d}\rho(dx\,dy)<
\infty
\end{equation}
for some open set $B$.
\end{theorem}

\begin{pf}
The proof relies of several theorems that will appear later in this
section. The first statement follows from
Theorem~\ref{thrreal-hardcore-pp} using the fact that the packing
number $P_{t}(\mathbb{X})$ of $\mathbb{X}$ is bounded by $ct^{-d}$
for all
sufficiently small $t$. For \textup{(ii)}, apply
Theorem~\ref{thstat}\textup{(ii)} noticing that the imposed
condition is equivalent to (\ref{eqrhobar-regular}).
\end{pf}

In the remainder of this section, one can find a quantification of
this result [i.e., how the left-hand member of
(\ref{eqreg-condition-intro}) controls the value of $\mathbf{E}
\chi_{0}(X\cap C)]$ as well as generalisations for general metric
spaces. The main argument used is a splitting method based on
Theorem~\ref{thrreal-reg}; the details are made clear in the proof of
Theorem~\ref{thrreal-pp-psi}. Note that the packing number of the
metric space appears as a crucial quantity to uncouple in this way the
realisability problem; see Lemma~\ref{thrpsi-rm}.

\subsection{Moment conditions}

The family $\mathcal{X}_k$ of all counting measures with total mass at most
$k$ on a compact space $\mathbb{X}$ is compact. Thus, a measure $\rho$
on $\mathbb{X}\times\mathbb{X}$ is realisable as a point process
with at most $k$
points if (\ref{eqppk}) holds with $\mathcal{X}=\mathcal{X}_k$.

Assume that $Y$ is a finite counting measure. For $\alpha>2$, define
\[
\chi_{\alpha}(Y)=Y(\mathbb{X})^\alpha,\qquad Y\in\mathcal{N}.
\]
The finiteness of $\mathbf{E}\chi_\alpha(\xi)$ amounts to the
finiteness of
the moment of order $\alpha$ for the total mass of $\xi$. Since
$h\in\mathscr{C}_{\mathrm{o}}$ is bounded by a constant $c'$ and
$\alpha>2$, the family
\[
\bigl\{Y\in\mathcal{N}\dvtx  \chi_\alpha(Y)\leq c+\mathsf{g}_h(Y)
\bigr\} \subset\bigl\{Y\in\mathcal{N}\dvtx  Y(\mathbb{X})^\alpha\leq
c+c'Y(\mathbb{X})^2\bigr\}
\]
consists of counting measures with total masses bounded by a certain
constant and, therefore, is compact in the space $\mathcal{N}$. Hence,
$\chi_{\alpha}$ is a regularity modulus and so
Theorem~\ref{thrreal-reg} yields the realisability condition
%
\begin{equation}
\label{eqsupgin-g--2-a} \sup_{\mathsf{g}\in\mathsf{G}, \mathsf{g}\leq
\chi_\alpha} \Phi(\mathsf{g})<\infty
\end{equation}
of $\rho$ by a point process $\xi$ whose total number of points has
finite moment of order~$\alpha$. Note that
\cite{kunlebspeer11}, Theorem~3.14, provides a variant of this result
assuming the existence of the third factorial moment of the
cardinality of $\xi$ (i.e., with $\alpha=3$) and for the joint
realisability of the intensity and the correlation measures. The
condition of \cite{kunlebspeer11}, Theorem~3.14 (reformulated for the
correlation measure only) reads in our notation as
$c+\Phi(\mathsf{g}_h)+br\geq0$ whenever $c+\mathsf{g}_h+b\chi_3$
is nonnegative on~$\mathcal{N}$. Noticing that $b\geq0$, this is equivalent to the fact that
$c+\Phi(\mathsf{g}_h)\leq r$ whenever $c+\mathsf{g}_h\leq\chi_3$,
being exactly
(\ref{eqsupgin-g--2-a}). If $\Theta$ is a group of continuous
transformations acting on $\mathbb{X}$ and $\rho$ is $\Theta$-invariant,
then the point process $\xi$ can be chosen $\Theta$-stationary by
Theorem~\ref{thminvar-single}(ii).

In order to handle possibly nonfinite point processes $\xi$, define
\[
\chi_{\alpha,\beta}(Y)= \biggl(\sum_{x\in
Y}\beta(x)
\biggr)^\alpha,\qquad Y\in\mathcal{N},
\]
for a lower semi-continuous strictly positive function
$\beta\dvtx \mathbb{X}\mapsto\mathbb{R}$ and $\alpha>2$. By
approximating $\beta$
from below with compactly supported functions, it is easy to see that
$\chi_{\alpha,\beta}$ is a regularity modulus. By
Theorem~\ref{thrreal-reg} and (\ref{eqpositivity-with-constants}),
for any given $r\geq0$, there is a point process $\xi$ with
correlation measure $\rho$ such that $\mathbf{E}\chi_{\alpha,\beta
}(\xi)\leq
r$ if and only if $\rho$ satisfies
%
\begin{equation}
\label{eqreal-chi-alpha} \inf_{Y\in\mathcal{X}} \bigl[\chi_{\alpha,\beta
}(Y)-\mathsf
{g}_h(Y) \bigr] +\int_{\mathbb{X}\times\mathbb{X}}h(x,y)\rho(dx\,dy)\leq
r,\qquad h\in\mathscr{C}_{\mathrm{o}}.
\end{equation}

For $\alpha=3$, condition (\ref{eqreal-chi-alpha}) is a reformulation
of \cite{kunlebspeer11}, Theorem~3.17, meaning the positivity of
$\Phi$
on a family of positive polynomials that involve symmetric functions
of the support points up to the third order.
The realisability condition for $\Theta$-stationary random elements
can be obtained by applying Theorem~\ref{thrchi-approx}.

\subsection{Hardcore point processes on a compact space}\label{sechardcore}

Assume that $\mathbb{X}$ is a compact metric space with metric
$\mathbf{d}$. Let
$\mathcal{N}_{\varepsilon}$ be the family of \emph{$\varepsilon
$-hard-core} point sets in~$\mathbb{X}$ (including the empty set), that is, each $Y\in\mathcal
{N}_{\varepsilon}$ attaches
unit masses to distinct points with pairwise distances at least
$\varepsilon$
with a fixed $\varepsilon>0$. In this case, no multiple points are allowed,
that is, $\mathcal{N}_{\varepsilon}\subset\mathcal{N}_0$.

According to \cite{holstr78,konkut06}, a subset $\mathcal{X}$ of simple
counting measures $\mathcal{N}_0$ is relatively compact if and only if
$\sup\{Y(K)\dvtx  Y\in\mathcal{X}\}$ is finite and the infimum over
$Y\in\mathcal{X}$ of
the minimal distance between two points in $Y\cap K$ is strictly
positive for each compact set $K\subset\mathbb{X}$. The hard-core
condition yields that the number of points in any compact set is
uniformly bounded, and so $\mathcal{N}_{\varepsilon}$ is indeed
compact. By
Corollary~\ref{correal-reg}, $\rho$ is realisable as the correlation
measure of an $\varepsilon$-hard-core point process with given
$\varepsilon>0$ if
and only if
%
\begin{equation}
\label{eqfghg-infy-ghy} \Phi(\mathsf{g}_h)\geq\inf_{Y\in\mathcal
{N}_{\varepsilon}}
\mathsf{g}_h(Y)
\end{equation}
for all $h\in\mathscr{C}_{\mathrm{o}}$. This result is formulated in
\cite{kunlebspeer11}, Theorem~3.4, which essentially reduces to the
positivity of $\Phi$ over the family $c+\mathsf{g}_h$ (in our setting).

In this paper, we assume that the hardcore distance is not
predetermined and the point process takes realisations from
$\bigcup_{\varepsilon>0}\mathcal{N}_{\varepsilon}$, which coincides
with $\mathcal{N}_0$ in case
of compact $\mathcal{X}$. Note that (\ref{eqfghg-infy-ghy}) is stronger
than the positivity of $\Phi$ on functions $\mathsf{g}_h$ defined on the
whole family $\mathcal{N}_0$ and formulated as
%
\begin{equation}
\label{eqtot-posit} \Phi(\mathsf{g}_h)\geq\inf_{Y\in\mathcal{N}_0}
\mathsf{g}_h(Y),\qquad h\in\mathscr{C}_{\mathrm{o}}.
\end{equation}
If $\mathbb{X}$ does not have isolated points, then the infimum in
(\ref{eqtot-posit}) can be taken over $\mathcal{N}$. This is seen by
approximating a multiple atom with a sequence of simple counting
measures supported by points converging to the atom's location.

In the following, we use the (hard-core) regularity modulus of the form
\[
\chi^{\mathrm{hc}}_{\psi}(Y)=\sum_{x_i, x_j\in Y, i\neq j}
\psi\bigl(\mathbf{d}(x_i,x_j)\bigr), \qquad Y\in
\mathcal{N}_0,
\]
where $\psi\dvtx (0,\infty)\mapsto[0,\infty]$ is a monotone decreasing
right-continuous function, such that $\psi(t)\to\infty$ as
$t\downarrow0$. The compactness of $\mathbb{X}$ and the lower
semi-continuity of $\psi$ imply that $\chi^{\mathrm{hc}}_\psi$ is lower
semi-continuous on $\mathcal{N}_0$.
As shown below $\chi^{\mathrm{hc}}_{\psi}$ is a regularity modulus
if $\psi$ grows
sufficiently fast at zero.

Let $P_{t}(\mathbb{X})$ be the \emph{packing number} of $\mathbb
{X}$, that is, the
maximum number of points in $\mathbb{X}$ with pairwise distances
exceeding $t$, see \cite{mati95}, page~78. It is convenient to define
the packing number at $t=0$ as $P_{0}(\mathbb{X})=\infty$ if $\mathbb
{X}$ is
infinite and otherwise let $P_{0}(\mathbb{X})$ be the cardinality of
$\mathbb{X}$.

%
\begin{lemma}
\label{thrpsi-rm}
Function $\chi^{\mathrm{hc}}_{\psi}$ is a regularity modulus on
$\mathcal{N}_0$ if
%
\begin{equation}
\label{condpsi-rm} \psi(t)/P_{t}(\mathbb{X})\to\infty\qquad\mbox{as } t
\downarrow0.
\end{equation}
\end{lemma}

\begin{pf}
In view of the compactness of $\mathbb{X}$, it is possible to bound
$h\in\mathscr{C}_{\mathrm{o}}$ by a constant $\lambda$, so that
$\chi^{\mathrm{hc}}_\psi$ is a regularity
modulus if
\[
\mathcal{H}_{\lambda}=\bigl\{Y\in\mathcal{N}_0\dvtx
\chi^{\mathrm
{hc}}_{\psi}(Y)\leq\lambda Y(\mathbb{X})^2\bigr
\}
\]
is compact in $\mathcal{N}_0$ for each $\lambda>0$. For this, it
suffices to
show that the total mass of all $Y\in\mathcal{H}_{\lambda}$ is
bounded by a
fixed number and $\mathcal{H}_{\lambda}\subset\mathcal
{N}_{\varepsilon}$ for some
$\varepsilon>0$.

Let $\gamma_t(n)$ be the minimal number of pairs $(x_i,x_j)$ with
$i\neq j$, such that $x_i,x_j\in Y$ and $\mathbf{d}(x_i,x_j)\leq t$ over
all counting measures $Y$ of total mass $n$.

Take $t$ such that $\psi(t)/P_{t}(\mathbb{X})>\lambda$. If
$Y(\mathbb{X})\geq
n$, then
\[
\chi^{\mathrm{hc}}_\psi(Y) \geq\sum_{x_i, x_j\in Y, i\neq j}
\psi(t) \mathbh{1}_{\mathbf
{d}(x_i,x_j)\leq
t} \geq\gamma_t(n)\psi(t).
\]
Therefore,
\[
\mathcal{H}_\lambda\subset\bigl\{Y\dvtx  n^{-2}
\gamma_t(n)\psi(t)\leq\lambda\bigr\}
\]
consists of $Y$ with total mass uniformly bounded by fixed number
$n_\lambda$. Indeed, by Lemma~\ref{lemmaalgo},
\[
\lim_{n\to\infty} n^{-2}\gamma_t(n)\geq\lim
_{n\to\infty} n^{-2}n \biggl(\frac{n}{P_{t}(\mathbb{X})}-1 \biggr)
=P_{t}(\mathbb{X})^{-1}.
\]

Choose $\varepsilon>0$ so that $\psi(t)\geq\lambda n_{\lambda}^2$ for
$t\leq\varepsilon$. For $Y\in\mathcal{H}_{\lambda}$ and any
$x_i,x_j\in Y$,
\[
\psi\bigl(\mathbf{d}(x_i,x_j)\bigr)\leq
\chi^{\mathrm{hc}}_{\psi}(Y) \leq\lambda n_{\lambda}^2,
\]
whence $\mathbf{d}(x_i,x_j)\geq\varepsilon$. Thus, $\mathcal
{H}_{\lambda}\subset
\mathcal{N}_{\varepsilon}$, so $\mathcal{H}_\lambda$ is relatively compact.
\end{pf}

The following theorem shows that the realisability condition can be
split into the positivity condition (\ref{eqtot-posit}) on the linear
functional $\Phi$ and the regularity condition (\ref{eqreg-pp}) on
the correlation measure, so that the latter can be easily
checked. Such a split is possible because the regularity modulus
$\chi^{\mathrm{hc}}_\psi$ can be approximated by functions from
$\mathsf{G}$.

%
\begin{theorem}
\label{thrreal-pp-psi}
A locally finite measure $\rho$ on $\mathbb{X}\times\mathbb{X}$ is
the correlation
measure of a simple point process $\xi$ such that $\mathbf{E}
\chi^{\mathrm{hc}}_{\psi}(\xi)\leq r$ for some $r\geq0$ with $\psi
$ satisfying~(\ref{condpsi-rm}) if and only if (\ref{eqtot-posit}) holds and
%
\begin{equation}
\label{eqreg-pp} \int_{\mathbb{X}\times\mathbb{X}}\psi\bigl(\mathbf
{d}(x,y)\bigr)\rho
(dx\,dy)\leq r.
\end{equation}
\end{theorem}

\begin{pf}
\textit{Necessity}. The definition of the correlation measure
implies that
\[
\int_{\mathbb{X}\times\mathbb{X}}\psi\bigl(\mathbf{d}(x,y)\bigr)\rho
(dx\,dy) =
\mathbf{E}\chi^{\mathrm{hc}}_{\psi}(\xi)\leq r.
\]

\textit{Sufficiency}. First assume that $\psi$ only takes finite
values. The proof consists of checking
(\ref{eqpositivity-with-constants}), which is equivalent to
(\ref{eqlip}).

For each family of positive numbers $\{t_{\mathsf{g}}, \mathsf{g}\in
\mathsf{G}\}$,
%
\begin{equation}
\label{eqinterm-ineq} \qquad\sup_{\mathsf{g}\in\mathsf{G}}\inf_{Y\in\mathcal
{N}_0} \bigl[
\chi(Y)-\mathsf{g}(Y) \bigr]+\Phi(\mathsf{g}) \leq\sup_{\mathsf{g}\in
\mathsf{G}}\inf
_{Y\in\mathcal
{N}_{t_{\mathsf{g}}}} \bigl[\chi(Y)-\mathsf{g}(Y) \bigr]+\Phi(\mathsf{g}).
\end{equation}
The crucial step of the proof consists in the careful choice of
$t_{\mathsf{g}}>0$.

Fix $\mathsf{g}\in\mathsf{G}$. For $t>0$, define
$\psi_{t}(s)=\psi(\max(t,s))$, $s\geq0$. Since any $Y\in\mathcal{N}_{t}$
does not contain any two points at distance less than $t$,
$\chi(Y)=\mathsf{g}_{\psi_{t}}(X)$. Therefore,
%
\begin{equation}
\label{eqinterm-ineq2} \inf_{Y\in\mathcal{N}_{t}}\bigl[\chi(Y)-\mathsf
{g}(Y)\bigr] =
\inf_{Y\in\mathcal{N}_{t}}(\mathsf{g}_{\psi_{t}}-\mathsf{g}) (Y).
\end{equation}
Our aim is to prove that
%
\begin{equation}
\label{eqinterm-claim} \inf_{Y\in\mathcal{N}_{t}}(\mathsf{g}_{\psi_{t}}-
\mathsf{g}) (Y) =\inf_{Y\in\mathcal{N}_0}(\mathsf{g}_{\psi_{t}}-
\mathsf{g}) (Y),
\end{equation}
because then, since $\mathsf{g}_{\psi_{t}}\in\mathsf{G}$, the
positivity of
$\Phi$ on $\mathsf{G}$ yields that (\ref{eqinterm-ineq2}) is not
greater than $\Phi(\mathsf{g}_{\psi_{t}}-\mathsf{g})$. Thus,
(\ref{eqinterm-ineq}) is bounded above by
\[
\sup_{\mathsf{g}\in\mathsf{G}}\Phi(\mathsf{g}_{\psi_{t}}-\mathsf
{g})+\Phi(
\mathsf{g}) \leq\sup_{t}\Phi(\mathsf{g}_{\psi_{t}}) = \int
_{\mathbb{X}\times\mathbb{X}}\psi\bigl(\mathbf{d}(x,y)\bigr)\rho(dx\,dy)
\]
by the monotone convergence theorem.

The proof of (\ref{eqinterm-claim}) relies on the proper choice for
$t$ (depending on $\mathsf{g}$). Assume without loss of generality
$\mathsf{g}=\mathsf{g}_{h}$ for $h\in\mathscr{C}_{\mathrm{o}}$
with absolute value bounded by
$\lambda>0$. By (\ref{condpsi-rm}), there exists $t_{0}$ such that
$\psi(t_0)/P_{t_0}(\mathbb{X})\geq\lambda+1$. By Lemma~\ref{lemmaalgo},
there is $n_{0}$ such that for all $Y$ with mass $n\geq n_{0}$, the
number of pairs of points of $Y$ at distance at most $t_{0}$
satisfies
\[
\gamma_{t_{0}}(n) \geq n^{2}\frac{1}{P_{t_{0}}(\mathbb{X})}.
\]
Choose $t\leq t_{0}$ so that $\psi(t)>\lambda n_{0}^{2}$ and
consider any $Y\in\mathcal{N}_0\setminus\mathcal{N}_{t}$. If
$Y(\mathbb{X})\leq
n_{0}$, then
\[
\mathsf{g}_{\psi_{t}}(Y) \geq\psi(t) >\lambda n_{0}^{2}
\geq\mathsf{g}_{h}(Y),
\]
while if $Y(\mathbb{X})>n_{0}$, then
\[
\mathsf{g}_{\psi_{t}}(Y)-\mathsf{g}_{h}(Y) \geq
\mathsf{g}_{\psi_{t_{0}}}(Y)-\mathsf{g}_{h}(Y) \geq
\psi(t_{0})\gamma_{t_{0}}(Y)-\lambda n^{2}>0.
\]
Thus for $Y\notin\mathcal{N}_{t}$, we have
$\mathsf{g}_{\psi_{t}}(Y)-\mathsf{g}_{h}(Y)>0$. Therefore, the
infimum of
\mbox{$\mathsf{g}_{\psi_{t}}-\mathsf{g}_{h}$}, which is nonpositive
because zero is
obtained for $Y=\varnothing$, is reached on $\mathcal{N}_{t}$, and~(\ref{eqinterm-claim}) is proved.

Now assume that $\psi(t)$ is infinite for $t\in[0,\delta)$ and
finite on $(\delta,\infty)$ with $\delta>0$. If $\psi(t)\to\infty$
as $t\downarrow\delta$, then the above arguments apply with
$t_0>\delta$ chosen such that $\psi(t_0)/P_{\delta}(\mathbb
{X})>\lambda$.

Assume that $\psi(\delta)$ is finite. Let $\psi_0(t)$ be a function
satisfying~(\ref{condpsi-rm}) and finite for all $t>0$, for example,
$\psi_{0}(t)=t^{-1}P_{t}(\mathbb{X})$. Define $\psi^*(t)=\psi(t)$ for
$t\geq\delta$ and let $\psi^*(t)=\psi_0(t)+a$ for $t\in(0,\delta)$
with a sufficiently large $a$, so that $\psi^*$ is monotone
right-continuous, and $\chi^{\mathrm{hc}}_{\psi^*}$ is a regularity modulus.
Applying the previous arguments to $\psi^*$ yields that there exists
a point process $\xi$ such that $\mathbf{E}\chi^{\mathrm{hc}}_{\psi
^*}(\xi) \leq r$.
Since $r<\infty$, $\rho$ vanishes on
$\{(x,y)\dvtx  \mathbf{d}(x,y)<\delta\}$, and so $\mathbf{E}\chi
^{\mathrm{hc}}_{\psi^*}(\xi)=\mathbf{E}
\chi^{\mathrm{hc}}_{\psi}(\xi) \leq r$.
\end{pf}

The following result is obtained by letting $\psi$ be infinite on
$[0,\varepsilon)$ and otherwise setting it to zero.

%
\begin{corollary}
\label{corhardcore}
A measure $\rho$ on $\mathbb{X}\times\mathbb{X}$ is the correlation
measure of a
point~process $\xi$ with $\xi\in\mathcal{N}_{\varepsilon}$ a.s. if
and only if
(\ref{eqtot-posit}) holds and $\rho(\{(x,y)\dvtx\break
\mathbf{d}(x,y)<\varepsilon\})=0$.
\end{corollary}

The following result yields a direct realisability condition for
$\rho$ without mentioning a regularity modulus.

%
\begin{theorem}
\label{thrreal-hardcore-pp}
Let $\rho$ be a locally finite measure on $\mathbb{X}\times\mathbb
{X}$, and fix any
\mbox{$r\geq0$}. Then there exists, for every $r'>r$, a simple point
process $\xi$ with correlation measure $\rho$, such that
%
\begin{equation}
\label{eqhardcore-xi} \mathbf{E}\sum_{x_i,x_j \in\xi, i\neq j}
P_{\mathbf{d}(x_i,x_j)}(\mathbb{X})\leq r',
\end{equation}
if and only if (\ref{eqtot-posit}) holds and
%
\begin{equation}
\label{condreal-rho} \int_{\mathbb{X}\times\mathbb{X}}P_{\mathbf
{d}(x,y)}(\mathbb{X})
\rho(dx\,dy)\leq r.
\end{equation}
\end{theorem}

\begin{pf}
\textit{Necessity}. Call $h_{t}(x,y)=\min(t,P_{\mathbf
{d}(x,y)}(\mathbb{X}))$ for
$x\neq y\in\mathbb{X}$ and $t>0$. Assume that $\xi$ realises $\rho$
and satisfies (\ref{eqhardcore-xi}). The monotone convergence
theorem yields that
\[
\int_{\mathbb{X}\times\mathbb{X}}P_{\mathbf{d}(x,y)}(\mathbb{X})\rho
(dx\,dy) =\lim
_{t\to\infty}\mathbf{E}\mathsf{g}_{h_{t}}(\xi) \leq
r'
\]
for every $r'>r$, whence (\ref{condreal-rho}) holds.

\textit{Sufficiency}.
Define a measure on $\mathbb{R}_+$ by
\[
\rho'\bigl([a,b)\bigr)=\rho\bigl(\bigl\{(x,y)\in\mathbb{X}\times
\mathbb{X}\dvtx  a\leq\mathbf{d}(x,y)<b\bigr\}\bigr).
\]
Fubini's theorem yields that
\[
r=\int_{\mathbb{R}_+}P_{t}(\mathbb{X})
\rho'(dt).
\]
Let $\{t_{k}, k\geq1\}$ be a strictly decreasing sequence of
numbers such that
\[
\int_{[0,t_{k})}P_{t}(\mathbb{X})\rho'(dt)
\leq2^{-k}.
\]
For $m\geq1$, the function
\[
\psi_m(t)= %
\cases{ kP_{t}(\mathbb{X}), &\quad
if $t_{k+1}\leq t < t_k< t_m, k\geq1$,
\vspace*{3pt}\cr
P_{t}(\mathbb{X}), &\quad if $t\geq t_m$} %
\]
is monotone right-continuous and satisfies $\psi_m(t)/P_{t}(\mathbb
{X})\to
\infty$ as $t\to0$. Then
\begin{eqnarray*}
\int_{\mathbb{X}\times\mathbb{X}}\psi_m\bigl(\mathbf{d}(x,y)\bigr)
\rho(dx\,dy) &=&\int_{\mathbb{R}_+}\psi_m(t)
\rho'(dt)
\\
&\leq&\int_{\mathbb{R}_+}P_{t}(\mathbb{X})
\rho'(dt)+\sum_{k\geq
m}k 2^{-k}
\leq r+\sum_{k\geq m}k 2^{-k}.
\end{eqnarray*}
By Theorem~\ref{thrreal-pp-psi}, choosing $m$ sufficiently large
yields the realisability of $\rho$ by a point process $\xi$
satisfying
\[
\mathbf{E}\sum_{x_i,x_j \in\xi, i\neq j} P_{\mathbf{d}(x_i,x_j)}(\mathbb{X})
\leq\mathbf{E}\chi^{\mathrm
{hc}}_{\psi_m}(\xi) \leq r+\sum
_{k\geq m}k 2^{-k}<r'.
\]\upqed
\end{pf}

%
\begin{remark}
\label{remstat-hc}
Let $\Theta$ be a group of continuous transformations on $\mathbb{X}$
that leave $\rho$ invariant, that is, $\rho(\theta A\times\theta
B)=\rho(A\times B)$ for all $\theta\in\Theta$ and Borel $A,B$. Since
the regularity modulus $\chi^{\mathrm{hc}}_\psi$ can be approximated
from below
by a sequence of functions from $\mathsf{G}$,
Theorem~\ref{thrchi-approx}(i) is applicable and so the corresponding
point process $\xi$ in
Theorems~\ref{thrreal-pp-psi},~\ref{thrreal-hardcore-pp} and
Corollary~\ref{corhardcore} can be chosen $\Theta$-stationary. If
$\Theta$ consists of isometric transformations, then
Theorem~\ref{thminvar-single}(ii) is also applicable.
\end{remark}

\subsection{Noncompact case and stationarity}\label{secnon-compact-case}

Assume that $\mathbb{X}=\mathbb{R}^d$ and $\mathbf{d}(x,y)=\|x-y\|$
is the Euclidean
metric. Let $\psi$ be a positive right-continuous monotone function on
$\mathbb{R}_+$ such that $\psi(t)t^d\to\infty$ as $t\to0$. Denote
by $B_n$
the open ball of radius $n$ centred at $0$. Given a known bound for
the packing number in the Euclidean space (\cite{mati95}, page~78,
Lemma~\ref{thrpsi-rm}) implies that $\chi^{\mathrm{hc}}_{\psi}$ is
a regularity
modulus on every $B_n$, $n\geq1$. Define
%
\begin{equation}
\label{eqregmod-non-comp} \chi^{\mathrm{hc}}_{\beta\psi}(Y)=\sum
_{x_i,x_j \in Y, i\neq j} \beta(x_i,x_j)\psi\bigl(
\|x_i-x_j\|\bigr)
\end{equation}
for a bounded lower semi-continuous strictly positive on
$\mathbb{R}^d\times\mathbb{R}^d$ function~$\beta$.

%
\begin{theorem}
\label{thrnon-comp}
Let $\rho$ be a locally finite measure on $\mathbb{R}^d\times\mathbb{R}^d$.
\begin{longlist}[(ii)]
\item[(i)] The measure $\rho$ is realisable as the
correlation measure of a point process $\xi$ that satisfies
$\mathbf{E}\chi^{\mathrm{hc}}_{\beta\psi}(\xi)\leq r$ if and only if
(\ref{eqtot-posit}) holds and
%
\begin{equation}
\label{eqintegr-condition} \int_{\mathbb{R}^d\times\mathbb{R}^d}\beta
(x,y)\psi\bigl(\|x-y\|\bigr)\rho(dx\,dy)
\leq r.
\end{equation}
\item[(ii)] Fix $r\geq0$, let
%
\begin{equation}
\label{eqrn=-bnx-y} r_n=\int_{B_n\times B_n}\|x-y
\|^{-d}\rho(dx\,dy), \qquad n\geq1,
\end{equation}
and let $\{\beta_n, n\geq1\}$ be a sequence of nonincreasing
numbers converging to $0$. Then the following assertions are
equivalent.
\begin{enumerate}[(a)]
\item[(a)] Equation (\ref{eqtot-posit}) holds and
%
\begin{equation}
\label{eqbetan} \sum_{n\geq1}\beta_n(r_{n+1}-r_n)
\leq r<\infty,
\end{equation}
in particular every $r_{n}$, $n\geq1$, is finite.
\item[(b)] For every $r'>r$, there exists $\xi$ with correlation
measure $\rho$ and such that
%
\begin{equation}
\label{eqbound-betan} \sum_{n\geq1}(\beta_n-
\beta_{n+1}) \mathbf{E}\sum_{x_i,x_j\in B_n, i\neq j}
\|x_i-x_j\|^{-d} \leq r'.
\end{equation}
\end{enumerate}
\end{longlist}
\end{theorem}

\begin{pf}
\textit{Sufficiency}.
\textup{(i)} The function $\chi^{\mathrm{hc}}_{\beta\psi}$ is a regularity
modulus on $\mathcal{N}_0$, since
\[
\mathcal{H}_{c,h}=\bigl\{Y\in\mathcal{X}\dvtx  \chi^{\mathrm{hc}}_{\beta
\psi}(Y)
\leq c+\mathsf{g}_h(Y)\bigr\},\qquad c\in\mathbb{R}, h\in\mathscr
{C}_{\mathrm{o}},
\]
is compact in $\mathcal{N}_0$. This follows from Lemma~\ref{thrpsi-rm},
which yields the compactness of the restriction of $Y$ from
$\mathcal{H}_{c,h}$ onto any compact set $C$. Indeed, this family of
restricted counting measures coincides with the family of simple
counting measures supported by $C$ such that $\chi^{\mathrm{hc}}_\psi
(Y)\leq
c/m+ \mathsf{g}_{h/m}(Y)$, where $m>0$ is a lower bound of $\beta
(x,y)$ for
$x,y\in C$.

In order to apply Theorem~\ref{thrreal-reg} with the regularity
modulus (\ref{eqregmod-non-comp}) and in view of (\ref{eqlip}) it
suffices to show that
%
\begin{equation}
\label{eqreg-pp-noncompact} \inf_{Y\in\mathcal{N}_{0}} \bigl[\chi
^{\mathrm{hc}}_{\beta\psi
}(Y)-
\mathsf{g}_h(Y) \bigr] +\int_{\mathbb{R}^d\times\mathbb{R}^d}h(x,y)
\rho(dx\,dy)\leq r
\end{equation}
for all $h\in\mathscr{C}_{\mathrm{o}}$. Assume that $h$ is supported
by a subset of
$B_n\times B_n$ for some $n\geq1$. Then
(\ref{eqreg-pp-noncompact}) holds by the same reasoning as in the
proof of Theorem~\ref{thrreal-pp-psi} applied to the compact space
$B_n$. (One might first consider only $Y\subset B_{n}$, and then note
that the infimum over all $Y\in\mathcal{N}_0$ is necessarily
smaller.) By
Theorem~\ref{thrreal-reg}, (\ref{eqintegr-condition}) implies the
existence of a point process $\xi$ with correlation measure $\rho$
that satisfies $\mathbf{E}\chi^{\mathrm{hc}}_{\beta\psi}(\xi)\leq r$.\vspace*{2pt}

\textup{(ii)} Define $\mathbb{Y}_n=(B_n\times B_n)\setminus
(B_{n-1}\times B_{n-1})$, $n\geq1$ (with $B_0=\varnothing$). For
every $n\geq1$, define the measure
\[
\rho'_n\bigl([a,b)\bigr)=\rho\bigl(\bigl\{(x,y)\in
B_n\times B_n\dvtx  a\leq\|x-y\| < b\bigr\}\bigr),
\]
and let
\[
t_k^n=\sup\biggl\{t>0\dvtx  \int_{[0,t)}s^{-d}
\rho'_n(ds)\leq2^{-k} \biggr\}, \qquad k\geq1.
\]
Since $\rho'_{n+1}\geq\rho'_n$ for every $n\geq1$, for every
$k,n\geq1$ we have $t_k^{n+1}\leq t_k^n$. Let $\{m_n, n\geq1\}$
be a nondecreasing sequence of positive integers so that
%
\begin{equation}
\label{eqsum-rhoprime} \sum_{n\geq1}\beta_n
\biggl(r_n-r_{n-1}+\sum_{k\geq m_{n}}k2^{-k}
\biggr)\leq r'.
\end{equation}
Now define
\[
\psi_n(t)= %
\cases{ k t^{-d}, &\quad if
$t_{k+1}^n \leq t <t_k^n<
t_{m_n}^n$,
\cr
t^{-d}, &\quad if $t\geq
t_{m_n}^n$.} %
\]
Since $m_n\leq m_{n+1}$, $\psi_{n+1}\leq\psi_n$ for every $n \geq
1$. Function $\psi_n$ satisfies $\psi_n(t)t^d\to\infty$ as $t\to
0$, whence, for every $n\geq1$, $\chi^{\mathrm{hc}}_{\psi_n}$ is a
regularity
modulus on counting measures supported by $B_n$ and
\[
\int_{\mathbb{Y}_n}\psi_n\bigl(\|x-y\|\bigr)\rho(dx\,dy) \leq\int
_{\mathbb{R}_{+}}\psi_n(t)\rho''_n(dt),
\]
where
\[
\rho''_n\bigl([a,b)\bigr)=\rho\bigl(\bigl
\{(x,y)\in\mathbb{Y}_n\dvtx  a\leq\|x-y\|<b\bigr\}\bigr) \leq
\rho_n'\bigl([a,b)\bigr).
\]
Then
\[
\int_{\mathbb{Y}_n}\psi_n\bigl(\|x-y\|\bigr)\rho(dx\,dy) \leq\int
_{\mathbb{R}_+}t^{-d}\rho''_n(dt)+
\int_{t\leq
t_{m_n}}\psi_n(t)\rho''_n(dt).
\]
We have
\[
\int_{\mathbb{R}_+}t^{-d}\rho''_n(dt)
=\int_{(B_n\times B_n)\setminus( B_{n-1}\times B_{n-1})} \|x-y\|
^{-d}\rho(dx\,dy)=r_n-r_{n-1}
\]
and
\[
\int_{t\leq t_{m_n}} \psi_n(t)\rho''_n(dt)
\leq\int_{t\leq t_{m_n}} \psi_n(t)\rho_n'(dt)
\leq\sum_{k\geq m_n}k2^{-k},
\]
whence
%
\begin{equation}
\label{eqsum-rn-rn-1} \int_{\mathbb{Y}_n}\psi_n\bigl(\|x-y\|\bigr)
\rho(dx\,dy) \leq r_n-r_{n-1}+\sum
_{k\geq m_n}k2^{-k}.
\end{equation}
Define $\psi(x,y)=\psi_n(\|x-y\|)$ for $x,y\in\mathbb{Y}_n$. Since
$\psi_{n+1}\leq\psi_n$ and functions $\psi_n$, $n\geq1$, are lower
semi-continuous, the function $\psi$ is lower semi-continuous on
$\mathbb{R}^d\times\mathbb{R}^d$. Define $\beta(x,y)=\beta_n$ on
$\mathbb{Y}_n$. Since
$\beta_n$, $n\geq1$, decrease, $\beta$ is a lower semi-continuous
function on $\mathbb{R}^d\times\mathbb{R}^d$. Since $\psi_n \leq
\psi_k$ for every
$k\leq n$, the restriction of $\chi^{\mathrm{hc}}_{\beta\psi}$ onto\vspace*{2pt} sets
$Y\subset B_n$ is larger than $\chi^{\mathrm{hc}}_{\beta_n\psi_n}$, whence
$\chi^{\mathrm{hc}}_{\beta\psi}$ is a regularity modulus on
$\mathcal{N}_0$. By
Theorem~\ref{thrreal-reg}, $\Phi$ is realised by a point process
$\xi$ satisfying
\[
\mathbf{E}\chi^{\mathrm{hc}}_{\beta\psi}(\xi) \leq\int_{\mathbb
{R}^d\times\mathbb{R}^d}
\beta(x,y)\psi(x,y)\rho(dx\,dy) =\sum_{n\geq1}
\beta_n\int_{\mathbb{Y}_n}\psi_n\bigl(\|x-y\|\bigr)\rho
(dx\,dy).
\]
Since $t^{-d}\leq\psi_n(t)$ for each $n$ and $t>0$,
\begin{eqnarray*}
\mathbf{E}\chi^{\mathrm{hc}}_{\beta\psi}(\xi) &=&\lim_{m\to\infty}
\mathbf{E}\sum_{n=1}^m\beta_n
\Biggl(\chi^{\mathrm
{hc}}_{\psi_n}(\xi\cap B_n) -
\chi^{\mathrm{hc}}_{\psi_n}(\xi\cap B_{n-1})
\\
&&\hspace*{69pt}
\geq\lim_{m\to\infty} \mathbf{E}\sum_{n=1}^m
(\beta_n-\beta_{n+1})\chi^{\mathrm{hc}}_{ \psi_n}(\xi
\cap B_n)\Biggr)
\\
&\geq&\sum_{n\geq1}(\beta_n-
\beta_{n+1}) \mathbf{E}\sum_{i\neq j,x_i,x_j\in B_n}
\|x_i-x_j\|^{-d}.
\end{eqnarray*}
Using successively (\ref{eqsum-rn-rn-1}) and
(\ref{eqsum-rhoprime}),
\[
\sum_{n\geq1}\beta_n\int
_{\mathbb{Y}_n}\psi_n\bigl(\|x-y\|\bigr)\rho(dx\,dy) \leq\sum
_{n\geq
1}\beta_n\biggl(r_n-r_{n-1}+
\sum_{k\geq m_n}k2^{-k}\biggr) \leq
r',
\]
we arrive at (\ref{eqbound-betan}).

\textit{Necessity}. For \textup{(ii),} remark first that for $x\neq
y\in\mathbb{R}^{d}$
\[
\beta(x,y)\|x-y\|^{-d}= \sum_{n\geq1}(
\beta_n-\beta_{n+1}) \mathbh{1}_{x,y\in B_n}\|x-y
\|^{-d},
\]
where $\beta$ is defined in the sufficiency part of the proof. The
function $\beta\psi$ in \textup{(i)} and the function $(x,y)\mapsto
\beta(x,y)\|x-y\|^{-d}$ in \textup{(ii)} are lower semi-continuous
and can therefore be approximated from below by compactly supported
continuous functions. The rest follows from the monotone convergence
theorem similarly to the proof of necessity in
Theorem~\ref{thrreal-hardcore-pp}.
\end{pf}

%
\begin{remark}\label{rkbetan-exist}
Remark for point \textup{(ii)} that if each $r_{n}$, $n\geq1$, is
finite, there always exists a sequence $\{\beta_{n}\}$ of
sufficiently small numbers such that the right-hand side of
(\ref{eqbetan}) is finite.
\end{remark}

If the distribution of point process $\xi$ is invariant with respect
to the group $\Theta$ of translations of $\mathbb{R}^d$, then $\xi$
is called
\emph{stationary}. Its correlation measure $\rho$ is translation
invariant, that is, $\rho((A+x)\times(B+x))=\rho(A\times B)$ for all
$x\in\mathbb{R}^d$ and so
%
\begin{equation}
\label{eqrhoat-b=rh-bbarrh} \rho(A\times B)=\lambda^2 \int_A
\int_{\mathbb{R}^d}\mathbh{1}_{x+y\in B}\bar{\rho}(dy)\,dx,
\end{equation}
where $\lambda$ is the intensity of $\xi$ and $\bar\rho$ is a measure
on $\mathbb{R}^d$ called the \emph{reduced} correlation measure; see
\cite{schweil08}, page~76.

%
\begin{theorem}
\label{thstat}
Let $\bar\rho$ be a locally finite measure on $\mathbb{R}^d$, let
$\beta$ be
a bounded lower semi-continuous strictly positive function on $\mathbb{R}^d$
satisfying
\[
\bar\beta(y)=\int_{\mathbb{R}^d}\beta(x,x+y)\,dx<\infty,\qquad y\in
\mathbb{R}^d,
\]
and let $\psi$ be a monotone decreasing nonnegative function such
that $t^d\psi(t)\to\infty$.
\begin{longlist}[(ii)]
\item[(i)] $ \bar\rho$ is the reduced correlation measure
of a stationary point process $\xi$ that satisfies
$\mathbf{E}\chi^{\mathrm{hc}}_{\beta\psi}(\xi)\leq r$ if and only if
(\ref{eqtot-posit}) holds and
\[
\int_{\mathbb{R}^d}\bar\beta(y)\psi\bigl(\|y\|\bigr)\bar\rho(dy)\leq r.
\]
\item[(ii)] $\bar\rho$ is realisable as the reduced
correlation measure of a stationary point process $\xi$ that
satisfies (\ref{eqbound-betan}) for some sequence
$\{\beta_{n}, n\geq1\}$ if and only if
%
\begin{equation}
\label{eqrhobar-regular} \int_{B}\|y\|^{-d}\bar\rho(dy)<
\infty
\end{equation}
for some open ball $B$ containing the origin. If
$\int_{\mathbb{R}^d}\|y\|^{-d}\bar\rho(dy)$ is finite, it is
possible to
let $\beta_n=n^{-d-\delta}$, $n\geq1$, for any fixed $\delta>0$.
\end{longlist}
\end{theorem}

\begin{pf}
It suffices to use (\ref{eqrhoat-b=rh-bbarrh}) to confirm the
conditions imposed in Theorem~\ref{thrnon-comp}, see also
Remark~\ref{rkbetan-exist}. In order to show that $\xi$ can be
chosen stationary, note that $\chi^{\mathrm{hc}}_{\beta\psi}$ can
be pointwisely
approximated from below by a monotone sequence of functions from
$\mathsf{G}$, so Theorem~\ref{thrchi-approx}(i) applies.
\end{pf}

\subsection{Joint realisability of the intensity and correlation}

Recall that the intensity measure $\rho_1$ of a point process $\xi$ is
defined from
\[
\mathbf{E}\sum_{x_i\in\xi} h(x_i)=\int h(x)
\rho_1(dx),\qquad h\in\mathscr{C}_{\mathrm{o},1},
\]
where $\mathscr{C}_{\mathrm{o},1}$ is the family of continuous
functions on $\mathbb{X}$
with compact support. A~pair $(\rho_1,\rho)$ of locally finite
nonnegative measures on $\mathbb{X}$ and $\mathbb{X}\times\mathbb
{X}$, respectively, is
said to be jointly realisable if there exists a point process $\xi$
with intensity measure $\rho_1$ and correlation measure $\rho$.

Let $\mathsf{G}_1$ be the vector space formed by constants and functions
\[
\mathsf{g}_{h_1,h}(Y)=\sum_{x\in Y}h(x)+
\mathsf{g}_h(Y), \qquad Y\in\mathcal{N},
\]
for $h_1\in\mathscr{C}_{\mathrm{o},1}$ and $h\in\mathscr
{C}_{\mathrm{o}}$. It is easy to see that
Assumption~\ref{assumpg} is verified in this case. The pair
$(\rho_1,\rho)$ yields a linear functional
%
\begin{equation}
\label{eqfc+gh1-h=c+-+ints} \Phi(\mathsf{g}_{h_1,h})=\int_{\mathbb{X}}h_1(x)
\rho_{1}(dx) +\int_{\mathbb{X}\times\mathbb{X}}h(x,y)\rho(dx\,dy).
\end{equation}
The realisability of $\Phi$ by a point process $\xi$ means that
$\Phi(\mathsf{g}_{h_1,h})=\mathbf{E}\mathsf{g}_{h_1,h}(\xi)$.
Functional $\Phi$ is positive on
$\mathsf{G}_1$ if and only if
%
\begin{equation}
\label{eqfgh1-hgeq-infy} \Phi(\mathsf{g}_{h_1,h})\geq\inf_{Y\in\mathcal{X}}
\mathsf{g}_{h_1,h}(Y), \qquad h_1\in\mathscr{C}_{\mathrm{o},1},
h\in\mathscr{C}_{\mathrm{o}}.
\end{equation}

Similar arguments as before apply and yield the joint realisability
conditions. Consider the special case of stationary processes in
$\mathbb{X}=\mathbb{R}^d$ with the reduced correlation measure $\bar
\rho$ [see
(\ref{eqrhoat-b=rh-bbarrh})] and intensity $\rho_1(dx)=\lambda \,dx$
proportional to the Lebesgue measure.

%
\begin{theorem}
Let $\lambda$ be a constant, and let $\bar\rho$ be a locally finite
measure of~$\mathbb{R}^d$. Then there is a stationary point process
$\xi$
with intensity $\rho_{1}(dx)=\lambda \,dx$ and reduced correlation
measure $\bar\rho$ if $\Phi$ given by
(\ref{eqfc+gh1-h=c+-+ints}) satisfies
(\ref{eqfgh1-hgeq-infy}) with $\mathcal{X}=\mathcal{N}_0$ and
\[
\int_{B}\|z\|^{-d}\bar\rho(dz)<\infty
\]
for some open set $B$ containing the origin.
\end{theorem}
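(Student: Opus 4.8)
The plan is to run a variant of the proofs of Theorems~\ref{thr:non-comp}(ii) and \ref{th:stat}(ii), the only change being that the linear functional now lives on the larger space $\gspace_1$, which in addition to the quadratic polynomials $\gf_h$ contains the linear polynomials $\sum_{x\in Y}h_1(x)$. First I would pass from $\bar\rho$ to the correlation measure $\rho$ by means of (\ref{eq:rhoat-b=rh-bbarrh}): using the translation invariance of $\rho$ and the local finiteness of $\bar\rho$, the hypothesis $\int_B\|z\|^{-d}\bar\rho(dz)<\infty$ for some open ball $B$ around the origin guarantees that each $r_n=\int_{B_n\times B_n}\|x-y\|^{-d}\rho(dxdy)$ is finite, so by Remark~\ref{rk:betan-exist} there is a non-increasing sequence $\beta_n\downarrow0$ making the sums of type (\ref{eq:betan}) finite. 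Next I would build the non-compact regularity modulus $\chihc_{\beta\psi}$ of the form (\ref{eq:regmod-non-comp}), with $\psi(t)\asymp t^{-d}$ near $0$, exactly as in the proof of Theorem~\ref{thr:non-comp}(ii); by Lemma~\ref{thr:psi-rm} it is a regularity modulus on $\simplepp$. Since every $\gf_{h_1,h}$ is continuous in the vague topology, each element of $\gspace_1$ is $\chihc_{\beta\psi}$-regular, and Assumption~\ref{assump:g} is readily checked for $\gspace_1$; thus Theorem~\ref{thr:real-reg} reduces the task to verifying (\ref{eq:lip}) for $\fnc$ on $\gspace_1$.

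The main, though mild, obstacle is to re-run the splitting step from the proof of Theorem~\ref{thr:real-pp-psi} (localised to the balls $B_n$ as in Theorem~\ref{thr:non-comp}(ii)) when $\fnc$ is tested against $\gf_{h_1,h}$ rather than $\gf_h$. Fixing $\gf_{h_1,h}$ supported in $B_n$ and putting $\psi_t(s)=\psi(\max(t,s))$, one needs a choice $t=t_{\gf}$ for which $\inf_{Y\in\simplepp}\bigl(\gf_{\psi_t}-\gf_{h_1,h}\bigr)(Y)$ is attained on the hard-core family $\hcore{t}$, so that the positivity hypothesis (\ref{eq:fg_h_1-hgeq-inf_y}) with $\sX=\simplepp$ may be invoked there. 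This works because, for a configuration $Y$ with $n$ points in the (compact) support, $\chihc_{\psi_t}(Y)$ grows like $n^2\psi(t)/\pack{t}$, the quadratic term $\gf_h(Y)$ only like $n^2\|h\|_\infty$, and the new linear term $\sum_{x\in Y}h_1(x)$ only like $n\|h_1\|_\infty$; hence for $t$ small enough (depending on the bounds of $h$ and $h_1$ and, through (\ref{cond:psi-rm}) and Lemma~\ref{lemma:algo}, on the packing number) every $Y\in\simplepp\setminus\hcore{t}$ satisfies $(\gf_{\psi_t}-\gf_{h_1,h})(Y)>0=(\gf_{\psi_t}-\gf_{h_1,h})(\emptyset)$, which forces the (non-positive) infimum onto $\hcore{t}$. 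From here the bookkeeping with the exhaustion $B_n\uparrow\R^d$ and the sequence $\{\beta_n\}$ is word for word as in the proof of Theorem~\ref{thr:non-comp}(ii), and produces a simple point process $\xi$ with $\E\chihc_{\beta\psi}(\xi)<\infty$; in particular $\xi$ is a.s.\ locally finite, and by construction it has intensity $\lambda\,dx$ and reduced correlation measure $\bar\rho$.

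It remains to make $\xi$ stationary. The functional $\fnc$ on $\gspace_1$ is invariant under the translation group $\Theta$ of $\R^d$, since $\rho_1(dx)=\lambda\,dx$ is translation invariant and, by (\ref{eq:rhoat-b=rh-bbarrh}), so is $\rho$. As $\chihc_{\beta\psi}$ can be approximated pointwisely from below by a monotone sequence of functions taken from $\gspace\subset\gspace_1$ (approximate the lower semicontinuous function $(x,y)\mapsto\beta(x,y)\psi(\|x-y\|)$ from below by continuous compactly supported symmetric functions, exactly as in the proof of Theorem~\ref{th:stat}), Theorem~\ref{thr:chi-approx}(i) applies with $\gspace$ replaced by $\gspace_1$ and yields a $\Theta$-stationary realisation, which completes the proof.
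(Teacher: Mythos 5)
Your proposal is correct and follows essentially the same route as the paper: reduce to Theorem~\ref{thr:non-comp}(ii) via (\ref{eq:rhoat-b=rh-bbarrh}) and Remark~\ref{rk:betan-exist}, re-run the splitting argument of Theorem~\ref{thr:real-pp-psi} on $\gspace_1$, and obtain stationarity from Theorem~\ref{thr:chi-approx}(i). You are in fact more explicit than the paper's one-line justification (that $\gf_{h_1,h}$ is dominated by a multiple of a quadratic polynomial plus a constant) in checking that the linear term, being only $O(n)$, does not disturb the choice of the hard-core threshold $t_{\gf}$.
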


\begin{pf}
It suffices to note that $\mathsf{g}_{h_1,h}$ is dominated by
$c\mathsf{g}_h$ for
a constant $c$ and follow the proof of \textup{(ii)} in
Theorem~\ref{thrnon-comp}. The condition on $\bar\rho$ follows
from (\ref{eqrn=-bnx-y}) and (\ref{eqrhoat-b=rh-bbarrh}).
\end{pf}

\section{Realisability of covering probabilities for random sets}\label{secreals-under-smoothn}

The nature of realisability problem changes with the choice of the
family of subsets of a carrier space $\mathbb{X}$ taken as possible
values for a random set. We start with the case when a random set is
allowed to be any subset of $\mathbb{X}$, where realisability results
are available under minimal conditions, while the obtained random set
lacks properties and might even not be measurable. In the remainder of
this section, we treat the case of random closed sets, a classical
setting in stochastic geometry. Some examples of possible regularity
moduli are presented, along with the corresponding realisability
results that resemble those of \cite{kunlebspeer07} in the point
processes setting. The framework of random measurable sets with finite
perimeter (in the variational sense), developed in the forthcoming
paper \cite{GalLac}, provides a compromise between regularity of the
random set and the explicitness of conditions.

\subsection{Random binary functions}\label{secindic-rand-funct}

Let $\mathcal{X}$ be the family of all subsets of $\mathbb{X}$
identified with
their indicator functions. Endow $\mathcal{X}$ with the topology of pointwise
convergence and the corresponding $\sigma$-algebra.
Since $\mathcal{X}$ is compact, Corollary~\ref{correal-reg}
yields the following result.

%
\begin{theorem}
\label{thrindic}
Let $\mathsf{G}$ be a vector space that consists of continuous
functions on $\mathcal{X}$ and includes constants, and let $\Phi$ be
a map
from $\mathsf{G}$ to $\mathbb{R}$. Then there exists a random indicator
function $\xi$, such that $\Phi(\mathsf{g})=\mathbf{E}\mathsf
{g}(\xi)$ for all $\mathsf{g}\in
\mathsf{G}$ if and only if $\Phi$ is a linear positive functional on
$\mathsf{G}$ and $\Phi(1)=1$.
\end{theorem}

The key issue in applying Theorem~\ref{thrindic} is the choice of the
space $\mathsf{G}$.

%
\begin{example}[(One-point covering function)]\label{exopc}
Let $\mathsf{G}$ be generated by constants $c$ and one-point indicator
functions $\mathsf{g}_x(F)=\mathbh{1}_{x\in F}$, $F\in\mathcal{X}$,
for $x\in\mathbb{X}$.
The positivity of a linear functional $\Phi\dvtx \mathsf{G}\mapsto\mathbb{R}$
together with $\Phi(1)=1$ means that $\Phi(\mathsf{g}_x)\in[0,1]$
for all
$x\in\mathbb{X}$. Thus, a function $p_x=\Phi(\mathsf{g}_x)$ is a one-point
covering function $\mathbf{P}\{x\in\xi\}$ for a random set $\xi$ if and
only if $p_x$ takes values in $[0,1]$. Compare with
Theorem~\ref{throp}, where the extra upper semi-continuity
condition ensures that the corresponding random binary function is
upper semi-continuous and so $\xi$ is a random \emph{closed} set.
\end{example}

%
\begin{example}[(Covariances of random sets)]\label{excrs}
Consider vector space $\mathsf{G}$ generated by constants and functions
$\mathsf{g}_{x,y}(F)=\mathbh{1}_{x,y\in F}$ for $x,y\in\mathbb{X}$.
The values of
a linear functional $\Phi$ on $\mathsf{G}$ are determined by
$p_{x,y}=\Phi(\mathsf{g}_{x,y})$, $x,y\in\mathbb{X}$.
By~(\ref{eqphi-positivity}), $\Phi$ is positive on $\mathsf{G}$ if and
only if
%
\begin{equation}
\label{eqpos-cov-cond} \sum_{ij=1}^n
a_{ij} p_{x_i,x_j} \geq\inf_{F\subset\mathbb{X}} \sum
_{ij=1}^n a_{ij} \mathbh
{1}_{x_i,x_j\in F}
\end{equation}
for\vspace*{1.5pt} all $n\geq1$ and all matrices $(a_{ij})_{ij=1}^n$. In
particular, if $a_{ij}=a_ia_j$, then (\ref{eqpos-cov-cond}) implies
the nonnegative definiteness of $p_{x,y}$, $x,y\in\mathbb{X}$.
Note that the one-point covering probabilities are specified if
$p_{x,y}$ are given.
\end{example}

\subsection{The closedness condition}\label{secreal-rand-clos}

A \emph{random closed set} $\xi$ in a locally compact metric space
$\mathbb{X}$ is a random element that takes values in the family
$\mathcal{X}=\mathcal{F}$ of closed subsets of $\mathbb{X}$ equipped
with the
$\sigma$-algebra (called the Effros $\sigma$-algebra) generated by
families $\{F\in\mathcal{F}\dvtx  F\cap K\neq\varnothing\}$ for all
compact sets
$K$. The distribution of a random closed set $\xi$ is uniquely
determined by its \emph{capacity functional}
\[
T(K)=\mathbf{P}\{\xi\cap K\neq\varnothing\}
\]
for all $K$ from the family of all compact sets in $\mathbb{X}$; see
\cite{ma} and \cite{mo1}, Theorem~1.1.13.

Theorem~\ref{thrindic} ensures only the existence of a binary
stochastic process with given marginal distributions up to a certain
order. However, it is not guaranteed that the constructed stochastic
process has upper semi-continuous realisations, which should be the
case if this process is the indicator of a random \emph{closed} set in
a topological space $\mathbb{X}$. If the carrier space $\mathbb{X}$ is
finite (more generally, has a discrete topology), then this problem is
avoided, since each random set is closed. Furthermore, the closedness
issue can be settled in the following special case of two-point
probabilities in the product form (and can be generalised for
multi-point covering probabilities). The following result implies, in
particular, that the random indicator function from
Example~\ref{exproduct} does not correspond to a random closed
set. It also illustrates regularity problems arising in realisability
problems for random closed sets.

%
\begin{theorem}
\label{propproduct-form}
Assume that $\mathbb{X}$ is a separable space. A function
\[
p_{x,y}= %
\cases{ p_{x}p_{y}, &\quad
if $x\neq y$,
\vspace*{2pt}\cr
p_{x}, &\quad if $x=y$} %
\]
is a two-point covering function of a random closed set if and only
if $p_x$, $x\in\mathbb{X}$, is an upper semi-continuous function with values
in $[0,1]$ such that each point $x$ with $p_x\in(0,1)$ has an open
neighbourhood $U$ such that $p_y>0$ only for at most a countable
number of $y\in U$ and the sum of $p_y$ for $y\in U$ is finite.
\end{theorem}

\begin{pf}
\textit{Sufficiency}. Note that the set $L=\{x\dvtx  p_x=1\}$ is
closed by the upper semi-continuity of $p_x$. The separability of
$\mathbb{X}$ and the condition of theorem imply that the set $M=\{x\dvtx
p_x\in(0,1)\}$ is at most countable. The sufficiency is obtained
by a direct construction of a random subset $Z$ of $M$ that contains
each point $x$ with probability $p_x$ independently of all other
points. It remains to show that the random set $\xi=Z\cup L$ is
closed. Consider $x\in M$ and its neighbourhood from the condition
of theorem. Since $\sum p_y<\infty$, only a finite number of $y$
belong to $Z$ and so they do not converge to $x$. Thus, $x$ with
probability zero appears as a limit of other points from $\xi$ unless
$x\in L$ and so belongs to $\xi$ almost surely.

\textit{Necessity}. The function $p_x=\mathbf{P}\{x\in\xi\}$ is upper
semi-continuous, since $\xi$ is a random closed set. The product
form of
the two-point covering function implies that the capacity functional
on two-point set is given by
\[
T\bigl(\{x,y\}\bigr)=p_x+p_y-p_xp_y.
\]
The upper semi-continuity property of the capacity functional yields
that
\[
\limsup_{y\to x} T\bigl(\{x,y\}\bigr)\leq p_x,
\]
while the monotonicity implies that $T(\{x,y\})\to p_x$ as $y\to x$.
Thus, $p_y(1-p_x)\to0$ as $y\to x$ for all $x$. Unless $p_x=1$, we
have $p_y\to0$.

Assume that $p_x>0$ and $p_{x_n}>0$, where $x_n\to x$ and $x_n\neq
x$ with $\sum p_{x_n}=\infty$. A variant of the lemma of
Borel--Cantelli for pairwise independent events (see~\cite{frisgray97}, Lemma~6.2.5) implies that almost surely
infinitely many points $x_n$ belong to $\xi$, so that $x\in\xi$
a.s. by the closedness of $\xi$ and so $p_x=1$. Thus, the sum of
$p_{x_n}$ for each sequence $\{x_n\}$ in a neighbourhood of $x$ is
finite. This rules out the existence of uncountably many $y$ with
$p_y>0$ in any neighbourhood of $x$. Indeed, then $\{y\dvtx  p_y\geq
1/n\}$ is finite for all $n$, and so the union of such sets is
countable.
\end{pf}

It is known that $\mathcal{F}$ is compact and completely regular in
the Fell
topology that generates the Effros $\sigma$-algebra; see
\cite{kur66}, Theorem~17.V.3 and \cite{mo1}. However,
Corollary~\ref{correal-reg} is not applicable, since functions
$\mathbh{1}_{x,y\in F}$, $F\in\mathcal{F}$, generating the vector
space $\mathsf{G}$,
do not generate the Effros $\sigma$-algebra on $\mathcal{F}$.

It is known (\cite{mo1}, Theorem~1.2.6) that the $\sigma$-algebra generated
by $\mathsf{G}$ on the family of \emph{regular closed} sets (that
coincide with closures of their interiors) coincides with the trace of
the Effros $\sigma$-algebra on the family of regular closed sets.
However, the family of regular closed sets is no longer compact in the
Fell topology. Furthermore, indicator functions are not continuous in
the Fell topology, so it is again not possible to appeal to
Corollary~\ref{correal-reg} or explicitly check the upper
semi-continuity condition required in Daniell's theorem.

In order to describe a useful family $\mathsf{G}$ of functionals acting
on sets, consider a $\sigma$-finite measure $\nu$ on
$\mathbb{X}$ and define
%
\begin{equation}
\label{eqgh-set} \mathsf{g}_h(F) =\int_{F\times F}h(x,y)
\nu(dx)\nu(dy)
\end{equation}
for all measurable $F\subset\mathbb{X}$ and $h$ from the family
$\mathscr{C}_{\mathrm{o}}$ of
symmetric continuous functions with compact support in
$\mathbb{X}\times\mathbb{X}$. A function $p_{x,y}$, $x,y\in\mathbb
{X}$, generates a
functional acting on $\mathsf{g}_h$ as
%
\begin{equation}
\label{eqfgh-set} \Phi(\mathsf{g}_h) = \int_{\mathbb{X}\times\mathbb{X}}
p_{x,y}h(x,y)\nu(dx)\nu(dy).
\end{equation}
The function $p_{x,y}$ is said to be \emph{weakly realisable} as the
two-point covering function if there exists a random set $\xi$
(or the corresponding random indicator function) such that $\xi$ is
almost surely measurable and $\mathbf{E}\mathsf{g}_h(\xi)=\Phi
(\mathsf{g}_h)$ for all
$h\in\mathscr{C}_{\mathrm{o}}$. By approximating the atomic masses
at two points with
continuous functions, it is easy to see that the weak realisability is
equivalent to $\Phi(\mathsf{g}_{x,y})=p_{x,y}$ for $\nu\otimes\nu
$-almost all
$(x,y)$, in contrast to the \emph{strong realisability} requiring this
equality everywhere.
The strong and weak realisability do not coincide in general. For
instance, a nonpositive function which vanishes almost everywhere,
but takes some negative values is weakly realisable by the empty set,
but not strongly realisable. Nevertheless, in the case of a
\emph{stationary} random regular closed set $\xi$ in $\mathbb{R}^d$
and the
Lebesgue measure $\nu$, the strong and weak realisability properties
coincide; see Theorem~\ref{thrreal-eps}.

In view of the required continuity property of functions from
$\mathsf{G}$, it is essential to ensure that $\mathsf{g}_h(F)$, $F\in
\mathcal{F}$,
defined in (\ref{eqgh-set}) is continuous in the Fell topology. Note
that it is not the case for most nontrivial measures $\nu$, even for
the Lebesgue measure. The continuity holds only on some subfamilies of
$\mathcal{F}$ considered in the following sections.

\subsection{Neighbourhoods of closed sets}\label{seceps}

For simplicity, in the following consider random sets in the Euclidean
space, that is, assume that $\mathbb{X}=\mathbb{R}^d$ with Euclidean metric
$\mathbf{d}$.

Let $\mathcal{F}^\varepsilon$ be the family of $\varepsilon
$-neighbourhoods of closed sets
in $\mathbb{R}^d$, that is, $\mathcal{F}^\varepsilon$ consists of
$F^\varepsilon=\{x\dvtx
\mathbf{d}(x,F)\leq\varepsilon\}$ for all $F\in\mathcal{F}$ and
also contains the empty
set. The vector space $\mathsf{G}$ is generated by constants and the
functions $\mathsf{g}_h$ defined by (\ref{eqgh-set}) with the Lebesgue
measure $\nu$.

%
\begin{lemma}
\label{lemmacont-eps}
The space $\mathcal{F}^\varepsilon$ with the Fell topology is compact
and, for each
$h\in\mathscr{C}_{\mathrm{o}}$, the function $\mathsf{g}_h$ is
continuous on $\mathcal{F}^\varepsilon$.
\end{lemma}

\begin{pf}
Recall that the upper limit of a sequence of sets $\{F_n\}$ is the
set of all limits for sequences $\{x_{n_k}\}$ such that $x_{n_k}\in
F_{n_k}$ for all $k$, while the lower limit is the set of all limits
for convergent sequences $\{x_n\}$ such that $x_n\in F_n$ for all
$n$. The sequence of closed sets converges in the Fell topology if
its upper and lower limits coincide.

If $F_n=F_{n,0}^\varepsilon\in\mathcal{F}^\varepsilon$ converges to
$F$ in the Fell
topology, then we can assume without loss of generality (by passing
to subsequences) that $F_{n,0}$ converges to $F_0$, so that
$F=F_0^\varepsilon$ and $F\in\mathcal{F}^\varepsilon$. Thus,
$\mathcal{F}^\varepsilon$ is a closed subset
of $\mathcal{F}$ and so is compact, since $\mathcal{F}$ is compact itself.

Consider a nonnegative $h\in\mathscr{C}_{\mathrm{o}}$ supported by
a ball $B_R$ centred
at the origin with sufficiently large radius $R$. If $F_n\to F$ in
the Fell topology, then the upper limit of $(F_n\cap B_R)$ is a
subset of $(F\cap B_R)$. Thus, $\mathsf{g}_h(F)=\mathsf{g}_h(F_n\cap
B_R)\leq
\mathsf{g}_h((F\cap B_R)^\delta)$ for any $\delta>0$ and
sufficiently large
$n$, so that $\mathsf{g}_h$ is upper semi-continuous on $\mathcal{F}$
and so on
$\mathcal{F}^\varepsilon$.

In order to prove the lower semi-continuity of $\mathsf{g}_h$ on
$\mathcal{F}^\varepsilon$
assume $F_n=F_{n,0}^\varepsilon\to F=F_0^\varepsilon$ and $F_{n,0}\to
F_0$. Fix
$\delta>0$. Then the lower limit of $F_{n,0}\cap B_{r+\delta}$ includes
$F_0\cap B_R$. Indeed, if $x\in F_0\cap B_R$, then $x_n\to x$ for
$x_n\in F_{n,0}$ and so $x_n\in B_{R+\delta}$ for all sufficiently large
$n$. Thus, for sufficiently large $n$, we have
\[
(F_0\cap B_R)\subset(F_{n,0}\cap
B_{R+\delta})^\delta.
\]
Taking $(\varepsilon-\delta)$-neighbourhoods of the both sides yields that
\[
(F_0\cap B_{R})^{\varepsilon-\delta}\subset(F_{n,0}
\cap B_{R+\delta})^{\varepsilon}\subset(F_{n}\cap
B_{R+\delta+\varepsilon
}).
\]
If $x\in F_{0}^{\varepsilon-\delta}\cap B_{R-\varepsilon+\delta}$,
then there is a
point $y\in F_{0}$ with $\mathbf{d}(x,y)\leq\varepsilon-\delta$, in
particular
$y\in B_{R}$. Thus,
\[
\bigl(F_{0}^{\varepsilon-\delta}\cap B_{R-\varepsilon+\delta}\bigr)
\subset(F_{n,0}\cap B_{R})^{\varepsilon-\delta}.
\]
Taking $r$ sufficiently large yields that $\mathsf{g}_h(F_n)\geq
\mathsf{g}_h(F_0^{\varepsilon-\delta})$. Since the interior of $F$ equals
$\bigcup_{\delta>0} F_0^{\varepsilon-\delta}$, the Lebesgue theorem yields
that $\mathsf{g}_{h}(F_0^{\varepsilon-\delta})\to\mathsf
{g}_{h}(F)$ as $\delta\to0$, that
is, $\mathsf{g}_h$ is lower semi-continuous on $\mathcal{F}^\varepsilon$.

For a nonpositive function $h$ with compact support, the result
follows by applying the above argument to its positive and negative
parts.
\end{pf}

%
\begin{theorem}
A function $p_{x,y}$, $x,y\in\mathbb{R}^d$, is weakly realisable by a random
closed set $\xi$ with realisations in $\mathcal{F}^\varepsilon$ for given
$\varepsilon>0$ if and only if
\[
\Phi(\mathsf{g}_h)\geq\inf_{F\in\mathcal{F}^\varepsilon} \mathsf
{g}_h(F),\qquad h\in\mathscr{C}_{\mathrm{o}},
\]
where $\Phi(\mathsf{g}_h)$ is given by (\ref{eqfgh-set}).
\end{theorem}

\begin{pf}
In view of the continuity of $\mathsf{g}_h$ established in
Lemma~\ref{lemmacont-eps}, it suffices to refer to
Corollary~\ref{correal-reg}.
\end{pf}

In order to handle random sets with realisations from the space
$\mathcal{F}^0=\bigcup_{\varepsilon>0} \mathcal{F}^\varepsilon$, we
need the regularity modulus
$\chi(F)$ defined as the infimum of $\varepsilon>0$ such that
$F\in\mathcal{F}^{1/\varepsilon}$ and $\chi(F)=\infty$ if $F\notin
\mathcal{F}^0$.

%
\begin{theorem}
\label{thrreal-eps}
For any given $r>0$, a function $p_{x,y}$, $x,y\in\mathbb{R}^d$, is weakly
realisable by a random closed set $\xi$ such that
$\mathbf{E}\chi(\xi)\leq r$ if and only if
%
\begin{equation}
\label{eqfepso} \inf_{F\in\mathcal{F}^0 }\bigl[ \chi(F)-\mathsf{g}_h(F)
\bigr] +\Phi(\mathsf{g}_h)\leq r,\qquad h\in\mathscr{C}_{\mathrm{o}},
\end{equation}
where $\Phi(\mathsf{g}_h)$ is given by (\ref{eqfgh-set}). If,
additionally, $p_{x,y}$ is an even continuous function of $x-y$,
then $p_{x,y}$ is strongly realisable by a stationary random closed
set $\xi$.
\end{theorem}

\begin{pf}
Function $\chi$ is lower semi-continuous, since $\{F\in\mathcal{F}\dvtx
\chi(F)\leq c\}=\mathcal{F}^{1/c}$ is closed for all $c>0$. Furthermore,
%
\begin{equation}
\label{eqfinsf-chifleq-ghf} \bigl\{F\in\mathcal{F}\dvtx  \chi(F)\leq\mathsf{g}_h(F)
\bigr\} \subset\bigl\{F\in\mathcal{F}\dvtx  \chi(F)\leq c\bigr\},
\end{equation}
where $c=\int|h(x,y)|\nu(dx)\nu(dy)$ is a finite upper bound for
$\mathsf{g}_h(F)$. The left-hand side of (\ref
{eqfinsf-chifleq-ghf}) is
compact, since $\mathsf{g}_h$ is continuous on $\mathcal{F}^{1/c}$ by
Lemma~\ref{lemmacont-eps} and the right-hand side of
(\ref{eqfinsf-chifleq-ghf}) is compact. Thus, $\chi$ is a
regularity modulus and the result follows from
Theorem~\ref{thrreal-reg} and (\ref{eqpositivity-with-constants}).

Note that the regularity modulus $\chi$ is invariant for the group
$\Theta$ of translations of~$\mathbb{R}^d$. By
Theorem~\ref{thminvar-single}(ii), $\xi$ can be chosen to be stationary.
In order to confirm the strong realisability, it remains to show
that the covariance function of a stationary regular closed random
set is continuous.

Since $\chi(\xi)$ is integrable, $\xi\in\mathcal{F}^0$, so that
$\xi$
is almost surely regular closed and its boundary $\partial\xi$
has a.s. vanishing Lebesgue measure. By Fubini's theorem, almost
every point $x$ belongs to the boundary of $\xi$ with probability
zero, and so $\mathbf{P}\{x\in\partial\xi\}=0$ for all $x$ in view
of the
stationarity property.

Let $\mathbf{P}\{x,y\in\xi\}$ be the covariance function of $\xi$. Take
$x,y\in\mathbb{R}^d$, and $(x_n,y_n)$ that converges to $(x,y)$.
Since with
probability $1$, $x$ does not belong to $\partial\xi$,
$\mathbh{1}_{x\in\xi}$ is almost surely equal to $\mathbh{1}_{x\in
\operatorname{Int}(\xi)}$ for the interior $\operatorname{Int}(\xi)$ of
$\xi$ and so $\mathbh{1}_{x_n\in\xi}$ almost surely converges to
$\mathbh{1}_{x\in\xi}$. Similarly, $\mathbh{1}_{y_n\in\xi}\to
\mathbh{1}_{y\in
\xi}$ a.s., whence the product converges too $\mathbh{1}_{x_n\in
\xi,y_n\in\xi}\to\mathbh{1}_{x\in\xi,y\in\xi}$. The
Lebesgue theorem yields that $\mathbf{P}\{x_n,y_n\in\xi\}\to\mathbf
{P}\{x,y\in
\xi\}$. Since $p_{x,y}$ and $\mathbf{P}\{x,y\in\xi\}$ are both
continuous and coincide almost surely, they are equal
everywhere. The continuity of $\mathbf{P}\{x,y\in\xi\}$ can be also
obtained by referring to a result of \cite{mo88} saying that the
capacity functional of each stationary regular closed random set is
continuous in the Hausdorff metric.
\end{pf}

\subsection{Convexity restrictions}\label{secsmoothm-restr}

The family $\mathcal{C}$ of convex closed sets in $\mathbb{R}^d$
(including the empty
set) is closed in the Fell topology and it is easy to see that the
function $\mathsf{g}_h$ given by (\ref{eqgh-set}) is continuous on
$\mathcal{C}$.
Corollary~\ref{correal-reg} yields that $p_{x,y}$ is weakly
realisable for a convex random closed set if and only if
\[
\Phi(\mathsf{g}_h)\geq\inf_{F\in\mathcal{C}}
\mathsf{g}_h(F)
\]
for the functional $\Phi(\mathsf{g}_h)$ given by (\ref{eqfgh-set}).

Let $\mathcal{P}$ be the \emph{convex ring} in $\mathbb{R}^d$, that
is, the family
of finite unions of compact convex subsets of $\mathbb{R}^d$. For
$F\in\mathcal{P}$,
let $\chi(F)$ be the smallest number $k$, such that $F$ can be
represented as the union of $k$ convex compact sets.

%
\begin{theorem}
Let $\Phi$ be linear functional defined by (\ref{eqfgh-set}). Fix
any $r>0$. Then there is a random closed set $\xi$ with
realisations in $\mathcal{P}$ such that $\mathbf{E}\mathsf{g}_h(\xi
)=\Phi(\mathsf{g}_h)$ for
all $h\in\mathscr{C}_{\mathrm{o}}$ and $\mathbf{E}\chi(\xi)\leq
r$ if and only if
\[
\inf_{F\in\mathcal{P}} \bigl[\chi(F)-\mathsf{g}_h(F) \bigr]+
\Phi(\mathsf{g}_h)\leq r,\qquad h\in\mathscr{C}_{\mathrm{o}}.
\]
\end{theorem}

\begin{pf}
The family $\mathcal{P}_k$ of unions of at most $k$ convex compact
sets is
closed in $\mathcal{F}$ and so is compact, whence $\chi$ is lower
semi-continuous. It is easily seen that $\mathsf{g}_h$ is continuous on
convex compact sets, and so is also continuous on $\mathcal{P}_k$. Thus,
$\mathsf{g}_h$ is $\chi$-regular and Theorem~\ref{thrreal-reg} applies.
\end{pf}

If $\mathbb{X}=[0,1]$, then $\mathcal{P}$ is be the family of finite
unions of
segments in $[0,1]$. The number of convex components of
$F\subset[0,1]$ is the variation of its indicator function,
\[
\chi(F)=\sup\sum_{i=0}^{n-1} |
\mathbh{1}_{t_{i}\in F}-\mathbh{1}_{t_{i+1}\in F}|,
\]
where the supremum is taken over partitions $0=t_0\leq
t_1\leq\cdots\leq t_n=1$. The quantity
\[
\mathsf{v}(F)=\sup_{\varphi\in\mathscr{C}^{1}, 0 \leq\varphi
\leq1} \int_{F}
\varphi'(x)\,dx,
\]
where $\mathscr{C}^{1}$ is the family of differentiable functions on $[0,1]$,
captures the number of components of $F$ with nonempty interiors, in
particular $\mathsf{v}(F)\leq\chi(F)$. Remark that $\mathsf{v}$ is
not a regularity
modulus, because a set $F$ with small $\mathsf{v}(F)$ can contain an
arbitrarily large number of isolated singletons.

%
\begin{theorem}
\label{thmset-bounded-variation}
If $p_{x,y}$ is a function of $x,y\in[0,1]$ such that
%
\begin{equation}
\label{eqperimeter} \sup_{\varphi\in\mathscr{C}^{1},0 \leq\varphi\leq1}
\int_{\mathbb{X}\times\mathbb{X}}p_{x,y}
\varphi'(x)\varphi'(y)\,dx\,dy=\infty,
\end{equation}
then there is no random closed set $\xi$ satisfying $\mathbf{E}
\chi(\xi)^2<\infty$ having $p_{x,y}$ as its two-point covering
function.
\end{theorem}

\begin{pf}
Let $\mathsf{H}$ be the family of functions
$h(x,y)=\varphi'(x)\varphi'(y)$ for
$\varphi\in\mathscr{C}^{1}$ with $0 \leq\varphi\leq1$. Then
\[
\mathsf{v}(F)^2=\sup_{h\in\mathsf{H}}\int
_{\mathbb{X}\times
\mathbb{X}}\mathbh{1}_{x,y\in F}h(x,y)\,dx\,dy =\sup
_{h\in\mathsf{H}} \mathsf{g}_h(F).
\]
Theorem~\ref{thrreal-reg} implies that $\Phi$ is realisable by a
random closed set $\xi$ with $\mathbf{E}\chi(\xi)^2<\infty$ if and
only if
\[
\sup_{h\in\mathscr{C}_{\mathrm{o}}} \Bigl[\inf_{F\in\mathcal
{X}}\bigl[
\chi(F)^2-\mathsf{g}_{h}(F)\bigr] +\Phi(
\mathsf{g}_{h}) \Bigr]<\infty.
\]
It implies in particular
\[
\sup_{h\in\mathsf{H}} \Bigl[\inf_{F\in\mathcal{X}}\bigl[\chi
(F)^2-\mathsf{g}_{h}(F)\bigr] +\Phi(\mathsf{g}_{h})
\Bigr]<\infty.
\]
Since $\chi(F)^2\geq\mathsf{v}(F)^2\geq\mathsf{g}_{h}(F)$ for
$h\in\mathsf{H}$,
this condition would imply that
\[
\sup_{h\in\mathsf{H}}\Phi(\mathsf{g}_{h})<\infty,
\]
contradicting (\ref{eqperimeter}). Thus $\Phi$ is not realisable.
\end{pf}

Further results on realisability of random sets can be found in
\cite{GalLac}, where it is shown that by relaxing the closedness
assumption it is possible to split the positivity and regularity
conditions as it was the case in Section~\ref{sechardcore}.

\section{Contact distribution functions for random sets}\label{seccont-distr-funct}

Results from Section~\ref{secreals-under-smoothn} concern
realisability of the two-point covering probabilities, which are
closely related to the values of the capacity functional (hitting
probabilities) on two-point sets. Here, we consider the realisability
problem for a capacity functional defined on the family of balls
in $\mathbb{R}^d$. If $T$ is the capacity functional of a random
closed set~$\xi$, then
\[
T\bigl(B_R(x)\bigr)=\mathbf{P}\bigl\{\xi\cap B_R(x)
\neq\varnothing\bigr\}
\]
is closely related to the spherical contact distribution function
$\mathbf{P}\{\mathbf{d}(x,\xi)\leq R|x\notin\xi\}$, $R\geq0$,
which is the
cumulative distribution function of the distance between $\xi$ and
$x$ given that $x\notin\xi$.

%
\begin{theorem}
\label{thrcdf}
A function $\tau_x(R)$, $R\geq0$, $x\in A\subset\mathbb{R}^d$, is
realisable as
$T(B_R(x))$ for a random closed set $\xi$ if and only if
%
\begin{equation}
\label{eqnBalls} \Phi(\mathsf{g})=\sum_{i=1}^m
a_i\tau_{x_i}(R_i)\geq0
\end{equation}
for all $m\geq1$, $x_1,\ldots,x_m\in A$ and $R_1,\ldots,R_m\geq0$,
such that the function
%
\begin{equation}
\label{eqsum-fneq-0} \mathsf{g}(F)=\sum_{i=1}^m
a_i\mathbh{1}_{B_{R_i}(x_i)\cap
F\neq\varnothing}\geq0,\qquad F\in\mathcal{F}
\end{equation}
is nonnegative.
\end{theorem}

\begin{pf}
The necessity is evident.

\textit{Sufficiency}. Let $\mathsf{G}$ be the vector space generated by
constants and functions $\mathsf{g}_{h,x}(F)=h(\mathbf{d}(x,F))$,
$F\in\mathcal{F}$,
where $\mathbf{d}(x,F)$ is the distance from $x\in\mathbb{R}^d$ to
the nearest
point of $F$, and $h$ is a continuous function on $\mathbb{R}$ with bounded
support. The functions $\mathsf{g}_{h,x}$ are all continuous in the Fell
topology, since the Fell topology in $\mathbb{R}^d$ coincides with the
topology of pointwise convergence of distance functions $\mathbf{d}(x,F)$
for $x\in\mathbb{R}^d$; see \cite{mo1}, Theorem~B.12.

It suffices to show that $\Phi$ is positive on $\mathsf{G}$. Let
$\mathsf{g}(F)=\sum_{i=1}^m a_i h_i(\mathbf{d}(x_i,F))$. Uniform
approximation of
$h_1,\ldots,h_m$ by step functions on their supports yields a
function $\hat{\mathsf{g}}$ of the form (\ref{eqsum-fneq-0}) so that
$\hat{\mathsf{g}}(F)\geq-\varepsilon$ for some $\varepsilon>0$.
Letting $\varepsilon\downarrow0$
and using (\ref{eqnBalls}) yield that
\[
\Phi(\mathsf{g})=\sum_{i=1}^m
a_i \int h_i(t)\,d\tau_{x_i}(t)\geq0.
\]\upqed
\end{pf}

If $\tau_x(R)=\tau(R)$ does not depend on $x$, it may be possible to
realise it as the contact distribution function of a stationary random
closed set. If the argument $x$ of $\tau_x(R)$ takes only a single
value, then the necessary and sufficient condition on $\tau_x(\cdot)$
is that it is a nondecreasing right-continuous function with values
in $[0,1]$, that is, the cumulative distribution function of a
sub-probability measure on $\mathbb{R}_+$. The following result
concerns the
case of $x$ taking two possible values.

%
\begin{theorem}
Let $x_1,x_2\in\mathbb{R}^d$, with $l=\|x_{1}-x_{2}\|$, and let
$\tau_{x_1}$ and $\tau_{x_2}$ be cumulative distribution
functions of two sub-probability measures on $\mathbb{R}_+$. Then there
exists a random closed set $\xi$ such that
$\tau_{x_i}(R)=T(B_R(x_i))$ for $r\geq0$ and $i=1,2$ if and only
if for all $r\geq0$
%
\begin{equation}
\label{eqmathc-taux2=-infep} \tau_{x_1}\bigl(\max(R-l,0)\bigr)\leq
\tau_{x_2}(R)\leq\tau_{x_1}(R+l).
\end{equation}
\end{theorem}

\begin{pf}
\textit{Necessity}. Let $\xi$ be a random closed set with
$\tau_{x_i}(R)=T(B_R(x_i))$. Let $a_1$ and $a_2$ be random points
such that $a_1,a_2\in\xi$ a.s. and $R_i=\mathbf{d}(x_i,a_i)=\mathbf{d}
(x_i,\xi)$,
$i=1,2$, have cumulative distribution functions $\tau_{x_1}$ and
$\tau_{x_2}$, respectively. Then $|R_1-R_2|\leq l$. Indeed, if, for
instance, $R_1>R_2+l$, then $a_2$ is nearer to $x_1$ than $a_1$
contrary to the assumption. Thus $R_1\leq R$ implies $R_2\leq R+l$,
so that $\tau_{x_1}(R)\leq\tau_{x_2}(R+l)$. The symmetry argument
with $x_1$ and $x_2$ interchanged yields~(\ref{eqmathc-taux2=-infep}).

\textit{Sufficiency}. Define two random variables $R_1$ and $R_2$
as inverse functions to $\tau_{x_1}$ and $\tau_{x_2}$ applied to a
single uniform random variable, so that
(\ref{eqmathc-taux2=-infep}) yields that $|R_1-R_2|\leq l$ a.s.
This means that none of the balls $B_{R_1}(x_1)$ and $B_{R_2}(x_2)$
lies in the interior of the other one. Now construct random closed
set $\xi$ consisting of two points: $a_1$ on the boundary of
$B_{R_1}(x_1)$ but outside of the interior of $B_{R_2}(x_2)$ and
$a_2$ on the boundary of $B_{R_2}(x_2)$ but outside of the interior
of $B_{R_2}(x_1)$. Then $a_1$ is nearest to $x_1$ and $a_2$ is
nearest to $x_2$ with given distributions of the distance.
\end{pf}

\begin{appendix}
\section*{Appendix: A combinatorial lemma}\label{secappend-comb-lemma}

\setcounter{equation}{0}
\setcounter{definition}{0}

Recall that $P_{t}(\mathbb{X})$ denotes the \emph{packing number} of
$\mathbb{X}$ with
metric $\mathbf{d}$, that is, the maximum number of points in the space
$\mathbb{X}$
with pairwise distance exceeding $t$; see~\cite{mati95}, page~78.

%
\begin{lemma}
\label{lemmaalgo}
If $Y=\sum\delta_{x_i}$ is a counting measure of total mass $n$,
then for all $t>0$,
\[
\sum_{i\neq j} \mathbh{1}_{\mathbf{d}(x_i,x_j)\leq t} \geq n
\biggl(\frac{n}{P_{t}(\mathbb{X})}-1 \biggr).
\]
\end{lemma}

\begin{pf}
Denote
\[
n(Y,x_i)=Y\bigl(B_t(x_i)\bigr)-1,
\]
where $B_t(x_i)$ is the closed ball of radius $t$ centred at $x_i$.
Furthermore, denote
\[
\mathsf{g}_{h_t}(Y)=\sum_{i\neq j}
\mathbh{1}_{\mathbf
{d}(x_i,x_j)\leq t}.
\]
Then
\begin{eqnarray*}
\mathsf{g}_{h_t}(Y-\delta_{x_i})&=&\mathsf{g}_{h_t}(Y)-2n(Y,x_i),
\\
\mathsf{g}_{h_t}(Y+\delta_{x_i})&=&\mathsf{g}_{h_t}(Y)+2n(Y,x_i)+2.
\end{eqnarray*}
Let $x_i$ and $x_j$ be two distinct points from the support of $Y$
with $\mathbf{d}(x_i,x_j)\leq t$. Assume that $n(Y,x_i)< n(Y,x_j)$ or
$n(Y,x_i)= n(Y,x_j)$ with $i<j$ and define
\[
Y'=Y-\delta_{x_j}+\delta_{x_i}
\]
obtained from $Y$ by transferring a mass $1$ from $x_j$ to $x_i$.
Call $Y''=Y-\delta_{x_j}$. Remark that $n(Y'',x_i)=n(Y,x_i)-1$
because $\mathbf{d}(x_i,x_j)\leq t$. Since $n(Y,x_j)\geq n(Y,x_i)$,
\begin{eqnarray*}
\mathsf{g}_{h_t}\bigl(Y'\bigr)&=&\mathsf{g}_{h_t}
\bigl(Y''\bigr)+ 2n\bigl(Y'',x_i
\bigr)+2
\\
&=&\mathsf{g}_{h_t}(Y)-2n(Y,x_j)+2n\bigl(Y'',x_i
\bigr)+2
\\
&=&\mathsf{g}_{h_t}(Y)-2n(Y,x_j)+2n(Y,x_i)-2+2
\\
&\leq&\mathsf{g}_{h_t}(Y).
\end{eqnarray*}
Furthermore, $n(Y',x_i)=n(Y,x_i)$ because the transferred mass
remains in the ball with centre $x_i$ and radius $t$, and
$n(Y',x_j)=n(Y,x_j)$ as well. Thus, $n(Y',x_i)\leq n(Y',x_j)$. Repeat the
mass transfer from $x_j$ to $x_i$ until the mass at $x_j$
disappears. Call the resulting counting measure $Y_1$.

Apply the same construction to $Y_1$ and repeat it until there are
no more distinct points at distance at most $t$. This happens in a
finite time because the cardinality of the support of $Y$ strictly
decreases at each step.

The obtained counting measure $\widehat{Y}$ is supported by a set of
points $\{y_{1},\ldots, y_{q}\}$ with pairwise distances
exceeding $t$. Thus,
\[
\mathsf{g}_{h_t}(Y)\geq\mathsf{g}_{h_t}(\widehat{Y})= \sum
_{i=1}^q m_i(m_i-1),
\]
where $m_i=\widehat{Y}(\{y_i\})$. Under the restriction $\sum_{i=1}^q
m_i=n$, the minimal value $\sum_im_i(m_i-1)$ is reached for
$m_i=n/q$, whence
\[
\mathsf{g}_{h_t}(Y)\geq n \biggl(\frac{n}{q}-1 \biggr).
\]
It remains to note that $q\leq P_{t}(\mathbb{X})$.
\end{pf}

It is also possible to define a counting measure by placing masses
from the interval $[n/q,n/q+1]$ at the points forming the packing net
of $\mathbb{X}$. Thus, there exists a counting measure $Y$ such that
\[
\mathsf{g}_{h_t}(Y)\leq n \biggl(\frac{n}{P_{t}(\mathbb{X})}+1 \biggr).
\]
\end{appendix}

\section*{Acknowledgements}\label{secacknowledgements}
The authors are grateful to John Quintanilla and Zbigniew Lipecki for
literature hints at early stages of this work and to Tobias Kuna for
comments on the preprint version. The comments of the referees and the
Editor greatly inspired the authors to improve the readability of the
paper.
Raphael Lachieze-Rey is grateful to the University of Bern for hospitality.


%

\printaddresses

\end{document}